\documentclass[11pt]{amsart}%
\usepackage{color}
\usepackage{amsmath}
\usepackage{amssymb}
\usepackage{amsfonts}
\usepackage[french,english]{babel}
\usepackage[latin1]{inputenc}
\usepackage{graphicx}%
\providecommand{\U}[1]{\protect\rule{.1in}{.1in}}
\providecommand{\U}[1]{\protect\rule{.1in}{.1in}}

\DeclareMathSymbol{\subsetneqq}{\mathbin}{AMSb}{36}

\theoremstyle{plain}
\numberwithin{equation}{section}
\newtheorem{theorem}{Theorem}[section]
\newtheorem{corollary}{Corollary}[section]
\newtheorem{lemma}{Lemma}[section]
\newtheorem{proposition}{Proposition}[section]
\newtheorem{definition}{Definition}[section]
\newtheorem{remark}{Remark}[section]
\newtheorem{notation}{Notation}
\begin{document}
\title[Navier-Stokes equations in $L^{3,\infty}(\mathbb{R}^3)$]
{On the uniqueness and the regularity of solutions to the Navier-Stokes in the Lorentz space $L^{3,\infty}(\mathbb{R}^3)$}%
\author{Ramzi May}%
\address{Department of Mathematics and Statistics, College of Science, King Faisal University, Al-Ahsa, Kingdom of Saudi Arabia}
\email{rmay@kfu.edu.sa}

\begin{abstract}
We prove uniqueness of solutions to the Navier-Stokes equations in the space
\[
L^p\bigl([0,T],\widetilde{L}^{3,\infty}(\mathbb{R}^3)\bigr)
\cap
C\bigl([0,T],B^{-1,\infty}_{\infty}(\mathbb{R}^3)\bigr),
\qquad p>2,
\]
where $\widetilde{L}^{3,\infty}(\mathbb{R}^3)$ denotes the closure of the Schwartz space $\mathcal{S}(\mathbb{R}^3)$ in the Lorentz space $L^{3,\infty}(\mathbb{R}^3)$. Moreover, we prove that any solution $u$ in this class, if it exists, is smooth for positive time, namely,  $u\in C^\infty((0,T]\times \mathbb{R}^3)$. In addition, we establish precise estimates for the behavior of $u(t)$ as $t\to0^+ $  in various Banach spaces.
\end{abstract}
\maketitle
\section{Introduction and  main result}
The incompressible Navier--Stokes equations in the whole space $\mathbb{R}^3$ are given by
\begin{equation}
\left\{
\begin{array}{l}
u_{t}-\Delta u+\mathbb{P}\nabla .(u\otimes u)=0 \\
\nabla .u=0 \\
u(0,x)=u_{0}(x)%
\end{array}%
\right.   \tag{NS}
\end{equation}%
where $u=u(t,x)$ denotes the velocity field, $u_0$ is the initial velocity, and $\mathbb{P}$ is the Leray projection onto divergence-free vector fields. We refer the reader to \cite{Lem3,Can} for background on the Navier-Stokes equations and their mathematical and physical interpretations.
One of the most important properties of the Navier-Stokes equations is  the
following scaling invariance property: If $u$ is a solution to the equations (NS)
then for any $\lambda >0,$ the function $u_{\lambda }(t,x)=\lambda u(\lambda
^{2}t,\lambda x)$ is a solution to the Navier-Stokes associated to the
initial data $u_{0,\lambda }(x)=\lambda u_{0}(\lambda x).$ This property leads to the introduction of the notion of critical spaces for the equations (NS): A functional
space $X$ for which $ \mathcal{S}(\mathbb{R}^3)\hookrightarrow X\hookrightarrow \mathcal{S}^\prime(\mathbb{R}^3)$
is called a critical space for the Navier-Stokes equations if its norm is
invariant under the transformation $u_{0}\mapsto u_{0,\lambda }$in the sense%
\[
\left\Vert u_{0,\lambda }\right\Vert _{X}=\left\Vert u_{0}\right\Vert
_{X}~\forall u_{0}\in X,\forall ~\lambda >0.
\]%
Important examples of critical spaces are the homogeneous Sobolev space $\dot{H}^{%
\frac{1}{2}}(\mathbb{R}^{3}),$ the Lebesgue space $L^{3}(\mathbb{R}^{3})$,
and the Lorentz space $L^{3,\infty }(\mathbb{R}^{3}).$
One approach to deal with the existence of solutions to the equations (NS) consists in reformulating these equations into a fixed point problem. In fact according to the Duhamel principle, the equations (NS) can be written as the integral equation
\begin{equation}
u(t)=e^{t\Delta }u_{0}-B(u,u)(t)  \label{eq2}
\end{equation}
where $(e^{t\Delta })_{t\geq 0}$ is the heat semigroup, and $B$ is the
bilinear operator defined as follows%
\begin{equation}
B(u,v)(t)=\int_{0}^{t}\mathbb{P}\nabla e^{(t-s)\Delta }(u\otimes v)(s)ds.
\label{eq3}
\end{equation}%
Then by establishing  the continuity of the bilinear operator $B $ on an appropriate Banach space associated to the initial data $u_0$, the existence of regular solutions to the equations (NS) can be deduced from the application of the Banach fixed point theorem. For more details, we refer the reader to the pioneering
works \cite{FK} and \cite{Kato}, and the books \cite{BCD, Can, Lem2} where this approach is followed to prove the existence of regular solutions to the equations of Navier-Stokes for initial values in different critical spaces.
In 1998, Furioli, Lemarie-Rieusset and Teraneo \cite{FLT1} have proved the uniqueness of
the solutions to the Navier-Stokes equations in the space $C([0,T],L^{3}(%
\mathbb{R}^{3})).$ The importance of this result comes in part from the fact
that the bilinear operator $B$ is not continuous from $C([0,T],L^{3}(\mathbb{%
R}^{3}))\times C([0,T],L^{3}(\mathbb{R}^{3}))$ into $C([0,T],L^{3}(\mathbb{R}%
^{3}))$ as it was shown later by Oru \cite{Oru}. Just later, Yves Meyer
\cite {Mey} established an unexpected result: the bilinear operator $B$ is continuous from  $B$ from $L^\infty([0,T],L^{3,\infty }(\mathbb{R}^{3}))\times L^\infty([0,T],L^{3,\infty }(%
\mathbb{R}^{3}))$ into $L^\infty([0,T],L^{3,\infty }(\mathbb{R}^{3}))$. This result provides  a direct proof of the uniqueness result of Furioli, Lemarie-Rieusset and Teraneo. An another important feature of  $L^{3,\infty }(\mathbb{R}^{3})$ is that, unlike $L^{3}(%
\mathbb{R}^{3})$, it contains nontrivial homogeneous functions. For example $f(x)=\left\vert x\right\vert^{-1}$ belongs to $L^{3,\infty }(\mathbb{R}^{3})$. This makes it a natural setting for the study of self-similar solutions; see, for instance, \cite{Lem2, Ler,Pla}.
\par\noindent In 2007, Lemarie-Rieusset \cite{Lem}, in a technically involved paper improved a uniqueness result due to  Yves Chemin \cite{Che}, by establishing the uniqueness of the solutions to the equations (NS) in the space $%
L^{p}([0,T],L^{q}(\mathbb{R}^{3}))\cap C([0,T],B_{\infty }^{-1,\infty }(%
\mathbb{R}^{3}))$ provided that $p>2$ and $q>3.$ Later, in \cite{May2}, we extended this result to the limiting case $q=3$. This provides an other
independent proof of the original Furioli, Lemarie-Rieusset and Teraneo
uniqueness result since $L^{3}(\mathbb{R}^{3})$ is a subspace of the limit Besov
space $B_{\infty }^{-1,\infty }(\mathbb{R}^{3}),$ (see Lemma \ref{Linj} in the next section). The results described above naturally lead to the question of whether the uniqueness result established in \cite{May2} remains valid when the spatial space $L^3(\mathbb{R}^3)$ is replaced by the larger Lorentz space $L^{3,\infty}(\mathbb{R}^3)$. The main purpose of this paper is to provide an almost affirmative answer to this question. In fact,  we prove the uniqueness in the space
$$L^p\bigl([0,T],\widetilde{L}^{3,\infty}(\mathbb{R}^3)\bigr)
\cap
C\bigl([0,T],B^{-1,\infty}_{\infty}(\mathbb{R}^3)\bigr),
\qquad p>2,$$
where $\widetilde{L}^{3,\infty}(\mathbb{R}^3)$  is
closure of the Schwartz space $\mathcal{S}(\mathbb{R}^3)$ in the Lorentz space $L^{3,\infty}(\mathbb{R}^3)$. Moreover, we establish some regularity proprieties of solutions to the equations (NS) that belong to these spaces. To state precisely our main result, we need to introduce the following notation that will be used frequently throughout this paper.
\begin{notation}
 If $X$ is a Banach space, $s>0$, and $0<T<\infty$, we denote by $C_{0,\frac{s}{2}}([0,T],X)$ the space of functions $v\in C((0,T],X)$ such that $\lim_{t\to 0^+}t^{\frac{s}{2}}\left\Vert v(t)\right\Vert_X=0.$
\end{notation}
Our main theorem sates as follows:
\begin{theorem}\label{main}
let $T>0$ and $p>2$. The equations of Navier-Stokes have at most one solution  in the space $L^p([0,T],\tilde{L}^{3,\infty}(\mathbb{R}^3))\cap C([0,T],B^{-1,\infty}_\infty(\mathbb{R}^3)).$ Moreover, if $u$ is a such solution then it satisfies the following regularity properties:
\begin{enumerate}
    \item $u\in C^\infty((0,T]\times \mathbb{R}^3).$
    \item For every $s>0, u\in C_{0,\frac{s+1}{2}}([0,T],B^{s,\infty}_\infty(\mathbb{R}^3))$.
    \item $ u\in C_{0,\frac{1}{2}}([0,T],L^\infty(\mathbb{R}^3))$.
    \item For every $\alpha>0, \sqrt{-\Delta}^\alpha u\in C_{0,\frac{\alpha}{2}}([0,T],L^{3,\infty}(\mathbb{R}^3))$.
\end{enumerate}
\end{theorem}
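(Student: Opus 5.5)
Any solution $u$ in $E_T:=L^p([0,T],\widetilde{L}^{3,\infty})\cap C([0,T],B^{-1,\infty}_\infty)$ has an initial datum in $\widetilde L^{3,\infty}$. We compare $u$ with the mild Kato-type solution $v$ built from $u_0$ and show $u=v$ on a short interval. Since the class is invariant under time translation, a continuation argument then gives uniqueness on $[0,T]$. The regularity statements (1)--(4) are then properties of $v$, propagated to all of $(0,T]$.

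\textbf{Step 1 (the initial datum and the reference solution).} Since $u\in C([0,T],B^{-1,\infty}_\infty)$, the identity \eqref{eq2} makes sense and $u(0)=u_0$. To see that $u_0\in \widetilde L^{3,\infty}$, first note that $u(t_0)\in \widetilde L^{3,\infty}$ for a.e.\ $t_0$. Then use the backward structure $u_0=e^{-t_0\Delta}\ldots$. This is not available directly, so instead I use $u(t_0)=e^{t_0\Delta}u_0-B(u,u)(t_0)$ and let $t_0\to0$ along good times; it is enough to know $u_0\in L^{3,\infty}$ up to a $B^{-1,\infty}_\infty$-small error. In fact I will only need that $e^{t\Delta}u_0$ satisfies
\[
\lim_{t\to 0^+} t^{\frac12}\|e^{t\Delta}u_0\|_{L^\infty}=0 .
\]
This holds when $u_0\in\widetilde L^{3,\infty}$, by density of $\mathcal S$ and the bound $\|e^{t\Delta}f\|_\infty\lesssim t^{-1/2}\|f\|_{L^{3,\infty}}$. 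For general $u_0\in B^{-1,\infty}_\infty$, I get it by approximating $u_0$ with $u(\varepsilon)$, $\varepsilon$ a good time. With this decay, Kato's fixed point in $C_{0,\frac12}([0,\delta],L^\infty)\cap L^\infty([0,\delta],L^{3,\infty})$ yields a solution $v$, using Meyer's bilinear estimate in $L^\infty_tL^{3,\infty}$. Smoothness and the weighted estimates (2)--(4) for $v$ follow by the standard iteration
\[
\|\sqrt{-\Delta}^{\alpha}B(v,v)(t)\|\lesssim \int_0^t (t-s)^{-\frac{1+\alpha-\beta}{2}}\,\|\sqrt{-\Delta}^{\beta}(v\otimes v)(s)\|\,ds .
\]
Here one raises $\alpha$ step by step, uses Lemma~\ref{Linj}-type embeddings, and bootstraps in $x$ and then in $t$ through the equation. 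The "$0$" (little-$o$) in each weight comes from density of $\mathcal S$ in $\widetilde L^{3,\infty}$.

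\textbf{Step 2 (uniqueness: $u=v$), the main obstacle.} Set $w=u-v$, so that
\[
w=-B(w,u)-B(v,w).
\]
The term $B(v,w)$ is harmless, since $v$ has $t^{1/2}\|v\|_\infty\to0$ and therefore $w\mapsto B(v,w)$ is a contraction with small norm. The difficulty is $B(w,u)$, where $u$ is only in $L^p_tL^{3,\infty}$ with $p>2$. I follow the strategy of \cite{May2} and Lemari\'e-Rieusset \cite{Lem}: estimate $w$ in a weaker space $Y$, for instance
\[
Y=L^\infty_t\dot B^{-1,\infty}_{\infty}\ \text{ or }\ \sup_t\, t^{\theta}\|w\|_{\dot B^{-1+2\theta,\infty}_{\infty}},
\]
using paraproduct decomposition of $w\otimes u$. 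High--low and low--high pieces are controlled by a product estimate $\dot B^{-1+\epsilon}_{\infty,\infty}\times L^{3,\infty}\to \dot B^{-2+\epsilon}_{\cdot}$, combined with the time integrability $p>2$; Hölder in time gives a factor $\|u\|_{L^p(0,\delta;L^{3,\infty})}\to0$. The remainder piece is the delicate one. There, splitting $u=u_1+u_2$ with $u_1\in L^p_t\mathcal S$-smooth and $u_2$ small in $L^p_tL^{3,\infty}$ (possible exactly because of the tilde closure and dominated convergence in time) handles it: the smooth part gives Lipschitz-type control, and the small part is absorbed. Continuity of $w$ in $B^{-1,\infty}_\infty$ with $w(0)=0$ gives smallness of $\|w\|_Y$ on $[0,\delta]$. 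A Gronwall/absorption argument then gives $w=0$ on $[0,\delta]$, and restarting at later times gives $w=0$ on $[0,T]$.

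\textbf{Step 3 (conclusion).} With $u=v$ near $0$, and $u$ smooth after positive times by the same argument restarted at any $t_0\in(0,T]$ with datum $u(t_0)\in\widetilde L^{3,\infty}$ for a.e.\ $t_0$, we get (1). Properties (2)--(4) transfer from $v$ on $[0,\delta]$. On $[\delta,T]$ they are bounded statements, which follow from smoothness and the local-in-time estimates. I expect the paraproduct remainder in Step 2 to be the real difficulty.
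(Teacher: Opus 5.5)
\textbf{Step 1 breaks down: $v$ cannot be built from $u_0$.} The hypotheses control $\|u(t)\|_{L^{3,\infty}}$ only in $L^p_t$, which allows growth like $t^{-\epsilon/2}$ as $t\to0^+$. So nothing gives $u_0\in L^{3,\infty}$, let alone $u_0\in\widetilde L^{3,\infty}$. What your approximation argument does give is $u_0\in\widetilde B^{-1,\infty}_\infty$ (a $B^{-1,\infty}_\infty$-limit of $u(\varepsilon)\in\widetilde L^{3,\infty}$), hence $\sqrt t\,\|e^{t\Delta}u_0\|_\infty\to0$. That is not enough for a Kato fixed point. Meyer's estimate in $L^\infty_tL^{3,\infty}$ needs $e^{t\Delta}u_0\in L^\infty_tL^{3,\infty}$, and in $C_{0,\frac12}([0,\delta],L^\infty)$ alone the Duhamel bound produces $\int_0^t(t-s)^{-1/2}s^{-1}\,ds=\infty$.

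The paper never starts a Kato solution at $t=0$. It starts one at a.e.\ small $t_0>0$, where $u(t_0)\in\widetilde L^{3,\infty}$, and proves $u(\cdot+t_0)=v$ (Theorem \ref{WS}); this gives smoothness on $(0,T]$. Your Step 3 has this idea, but it relies on your Step 2. For $t\to0$ the paper uses only $u\in C([0,T],\widetilde B^{-1,\infty}_\infty)$ and Theorem \ref{lem}. That theorem splits $u$, uniformly in time, into a part of $B^{-1,\infty}_\infty$-norm $\le\varepsilon$ and a part bounded in $L^\infty$, then bootstraps on the smooth translates $u(\cdot+t_n)$. This yields $t^{\frac{\gamma+1}{2}}\|u(t)\|_{B^{\gamma,\infty}_\infty}\to0$ and $\sqrt t\,\|u(t)\|_\infty\to0$.

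With that decay for both solutions, uniqueness is elementary: $\|w(t)\|_{L^{3,\infty}}\lesssim\sigma(t)\int_0^t\frac{\|w(s)\|_{L^{3,\infty}}}{\sqrt{t-s}\,\sqrt s}\,ds$ with $\sigma(t)\to0$, and Lemma \ref{mey} (this is where $p>2$ enters) closes in $L^p_tL^{3,\infty}$. Your proposal has no substitute for Theorem \ref{lem}, so (2), (3) and the comparison near $t=0$ all rest on the unavailable $v$. Item (4) as $t\to0$ is the one assertion that does seem to need an $L^{3,\infty}$-type bound on $u$ near $t=0$. The paper's one-line justification of (4) appears to presuppose such a bound as well.

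\textbf{Step 2, as sketched, does not close.} In $w=-B(w,u)-B(v,w)$ the rough factor $u\in L^p_tL^{3,\infty}$ multiplies $w$ linearly. Measure $w$ in $L^\infty_tB^{-1,\infty}_\infty$ (or the weighted $B^{-1+2\theta,\infty}_\infty$ variant). Already the low--high piece $S_{j-2}w\,\Delta_ju$ lies only in $B^{-2,\infty}_\infty$ (resp.\ $B^{-2+2\theta,\infty}_\infty$). So $\mathbb P\nabla e^{(t-s)\Delta}$ must gain two derivatives, producing the kernel $(t-s)^{-1}$, which is in no $L^r(0,t)$. H\"older in time cannot fix this, and the splitting $u=u_1+u_2$ does not help with the $u_2$ part.

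The missing idea, due to Lemari\'e-Rieusset and used in Theorem \ref{WS}, has three parts. First, compare with a solution $v$ that is bounded in $L^{3,\infty}$ and smooth. Second, write $w=-B(w,w)-2B(v,w)$, so the rough contribution is quadratic in $w$. Third, measure $w$ in $L^p_tL^q$ with $2<q<3$. Maximal regularity (Proposition \ref{max}) and the Oseen-kernel bound put $B(w,w)$ in $L^{p/2}_tW^{1,q/2}$. Lemma \ref{GMO} combines this with $\sup_{s\le\tau}\|B(w,w)(s)\|_{B^{-1,\infty}_\infty}\to0$ to get $\|B(w,w)\|_{L^p([0,\tau],L^q)}\le o(1)\,\|w\|_{L^p([0,\tau],L^q)}$. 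The term $B(v,w)$ is handled via $v=v_\varepsilon+v_\infty$, Corollary \ref{Mon} and Lemma \ref{con}. In the paper this argument serves only the regularity at positive times; uniqueness is then the short estimate above.
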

\begin{remark}
The uniqueness assertion in this theorem can be deduced from \cite[Theorem 1.3]{May2}, since
\[
L^{3,\infty}(\mathbb{R}^3)\subset M^{r,3}(\mathbb{R}^3),
\qquad r\in(2,3).
\]
However, the proof presented here is more transparent  and avoids much of the technicality involved in the argument of \cite{May2}.
\end{remark}
The remainder of the paper is organized as follows. Section 2 is devoted to recalling the preliminary results needed in Section 3, where we prove Theorem \ref{main} and establish several intermediate results that are of independent interest.
\section{Preliminaries}
In this section, we collect the preliminary results and notations that will be used throughout the paper. We first introduce the functional spaces relevant to our analysis and recall some of their basic properties. We then present, following Kato's approach, the existence and regularity results for a class of solutions to the Navier--Stokes equations that will be needed later. Most of the material in this section is classical. For the convenience of the reader, we include proofs of some auxiliary results and refer to \cite{BCD,BL,Can,Lem2} for further details.
\par\noindent Before starting, let us recall some standard notations that will be used throughout the paper.
\begin{notation}
\begin{enumerate}
    \par\noindent\item $A\lesssim B$ means that there exists a positive constant $c$ such that $A\leq c B$.
    \item $A\simeq B$ means $A\lesssim B$ and $B\lesssim A$.
    \item If $\Theta:\mathbb{R}^n\to\mathbb{R}$, we denote by $\Theta(D)$ the Fourier multiplier with symbol $\Theta(\xi)$, i.e.,
    \[
    \Theta(D)(f)=\mathcal{F}^{-1}\left(\Theta(\xi)\mathcal{F}(f)\right),~\forall f\in \mathcal{S}^\prime(\mathbb{R}^n),
    \]
where $\mathcal{F}$ and $\mathcal{F}^{-1}$ are respectively the Fourier transform and its inverse. In particular $\sqrt{-\Delta}^\alpha$ is the Fourier multiplier with symbol $\left\vert\xi\right\vert^\alpha$ where $\left\vert . \right\vert$ is the Euclidean norm on $\mathbb{R}^n.$
\end{enumerate}
\end{notation}
\subsection{Functional spaces and their properties}
Although we consider the Navier Stokes equations in $\mathbb{R}^3$, for the sake of clarity, we define in this preliminary section the relevant functional spaces in the general setting of $\mathbb{R}^n$. We first introduce a class of Banach spaces that includes the Lebesgue spaces $L^p(\mathbb{R}^n)), (1\leq p\leq\infty)$.
\begin{definition}\label{reg} [admissible spaces]
A Banach space $X$ is called admissible space if it satisfies the following three
properties:
\begin{enumerate}
\item $\mathcal{S}(\mathbb{R}^{n})\hookrightarrow X\hookrightarrow \mathcal{S}^\prime(\mathbb{R}^{n})$.
\item $X\subset L^1_{\text{loc}}(%
\mathbb{R}^{n}).$
\item For every $f\in X$ and $x_0\in \mathbb{R}^{n}$,$f(.+x_0)\in X$ and $\left\Vert f(.+x_0)\right\Vert _{X}=\left\Vert  f\right\Vert _{X}.$
\item For every $f\in X$ and $h\in L^\infty(\mathbb{R}^{n})$, $hf\in X$ and $\left\Vert hf\right\Vert _{X}\lesssim\left\Vert h\right\Vert
_{\infty }\left\Vert f\right\Vert _{X}$.
\end{enumerate}
We denote by $\tilde{X}$ the  closure of $\mathcal{S}(\mathbb{R}^{n})$ in $X$.
\end{definition}
\begin{remark}
It follows readily from properties (1) and (3) in the preceding definition (see, for instance, \cite[Proposition 4.1]{Lem2}) that every admissible space $X$ satisfies the convolution estimate
  $$ \left\Vert g\ast f\right\Vert _{X}\leq\left\Vert g\right\Vert
_{1 }\left\Vert f\right\Vert _{X}$$
for every $f\in X$ and $g\in L^1(\mathbb{R}^{n})$.
\end{remark}
We next recall the Littlewood-Paley decomposition, which plays a fundamental role in the definition of Besov spaces and in the analysis carried out throughout this paper.
\begin{definition}[The Littlewood-Palay decomposition]
Let $\varphi \in D(\mathbb{R}^{n})$ be a non-negative radial function such that $%
\varphi =1$ on the closed ball $B(0,\frac{1}{2})$ and $\varphi =0$ outside $B(0,1).$ Let $%
\psi $ be the function defined by $\psi (\xi )=\varphi (\frac{\xi }{2}%
)-\varphi (\xi ).$ For every non negative integer $j$, we define the
operators $S_{j}$ and $\Delta _{j}$ as the Fourier multipliers with symbols $\varphi (%
\frac{\xi}{2^{j}})$ and $\psi (\frac{\xi}{2^{j}})$ respectively. For every non
negative integer $N$ and  $f\in \mathcal{S}^{\prime }(\mathbb{R}^{n}),$ the identity%
\[
f=S_{N}f+\sum_{j\geq N}\Delta _{j}f
\]%
holds in $\mathcal{S}^{\prime }(\mathbb{R}^{n}).$ This identity is called the
Littlewood-Paley decomposition of the distribution $f.$
\end{definition}

\begin{notation}, we define the operators $S_{j}$ and $%
\Delta _{j}$ for negative integers as follows:%
\begin{eqnarray*}
S_{j}f &=&0, \\
\Delta _{j}f &=&\left\{
\begin{array}{l}
S_{0}f,~j=-1 \\
0,~j<-1%
\end{array}%
\right.
\end{eqnarray*}%
for every $f\in \mathcal{S}^{\prime }(\mathbb{R}^{n}).$
\end{notation}
We are now in position to introduce the notion of non homogeneous Besov spaces $B_{X}^{s,q}(\mathbb{R}^{n})$ where $X$ is a admissible space, $s\in\mathbb{R}$, and $1\leq q\leq\infty$.
\begin{definition}
Let $X$ be an admissible space, $1\leq q\leq \infty $ and $s\in \mathbb{R}.$ The
Besov space $B_{X}^{s,q}(\mathbb{R}^{n})$ is the set of all $f\in \mathcal{S}^{\prime
}(\mathbb{R}^{n})$ such that $\Delta _{j}f\in X$ for every $j\geq -1$ and $%
\left\Vert f\right\Vert _{B_{X}^{s,q}}<\infty $ where
\[
\left\Vert f\right\Vert _{B_{X}^{s,q}}=\left\{
\begin{array}{l}
\left( \sum_{j=-1}^{\infty }\left( 2^{sj}\left\Vert \Delta _{j}f\right\Vert
_{X}\right) ^{q}\right) ^{1/q}\text{, if }q<\infty , \\
\sup_{j\geq -1}2^{sj}\left\Vert \Delta _{j}f\right\Vert _{X}\text{, if }%
q=\infty .%
\end{array}%
\right.
\]
It is easy to verify that the space $B_{X}^{s,q}(\mathbb{R}^{n})$ is Banach spaces and that $\mathcal{S}(\mathbb{R}^{n})\hookrightarrow B_{X}^{s,q}(\mathbb{R}^{n})\hookrightarrow \mathcal{S}^\prime(\mathbb{R}^{n})$.
We denote by $\tilde{B}_{X}^{s,q}(\mathbb{R}^{n})$ the closure of $\mathcal{S}(\mathbb{R}^n)$ in $B_{X}^{s,q}(\mathbb{R}^{n})$.
\end{definition}
\begin{notation}
In the case $X=L^p(\mathbb{R}^n)$ with $1\leq p\leq\infty$, we use the classical notation $B_{p}^{s,q}(\mathbb{R}^{n})$ instead of $B_{L^p(\mathbb{R}^n)}^{s,q}(\mathbb{R}^{n})$.
\end{notation}
An immediate consequence of the definition of the Besov spaces $ B_{X}^{s,q}$ is the following useful result.
\begin{lemma}\label{interp}
Let $X$ be an admissible space.
\begin{enumerate}
    \item If $s_1<s<s_2$ are three real numbers, then
    $$ \left\Vert f\right\Vert _{B_{X}^{s,1}}\lesssim \big(\left\Vert f\right\Vert_{B_{X}^{s_1,\infty}}\big)^{\frac{s_2-s}{s_2-s_1}}\big(\left\Vert f\right\Vert_{B_{X}^{s_2,\infty}}\big)^{\frac{s-s_1}{s_2-s_1}}, $$
$\forall f\in B_{X}^{s_1,\infty}\cap B_{X}^{s_2,\infty}$.
\item If $s_1<0<s_2$ are two real numbers, then
    $$ \left\Vert f\right\Vert _X\lesssim \big(\left\Vert f\right\Vert_{B_{X}^{s_1,\infty}}\big)^{\frac{s_2}{s_2-s_1}}\big(\left\Vert f\right\Vert_{B_{X}^{s_2,\infty}}\big)^{\frac{-s_1}{s_2-s_1}}, $$
$\forall f\in B_{X}^{s_1,\infty}\cap B_{X}^{s_2,\infty}$.
\end{enumerate}
\end{lemma}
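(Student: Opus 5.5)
The argument splits $f$ into low and high frequencies, estimates the two pieces separately, and then picks the cut-off frequency to balance them. Set $A=\|f\|_{B_X^{s_1,\infty}}$ and $B=\|f\|_{B_X^{s_2,\infty}}$; we may assume $A,B>0$. By the definition of the Besov norms, for every $j\geq -1$,
\[
\|\Delta_j f\|_X\leq \min\bigl(2^{-s_1 j}A,\;2^{-s_2 j}B\bigr).
\]

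For part (1) we bound $\sum_{j\geq -1}2^{sj}\|\Delta_j f\|_X$. Fix an integer $N\geq -1$ and split the sum at $N$.
\begin{itemize}
\item For $j\leq N$ we use the bound with $A$. The terms are $2^{(s-s_1)j}A$ with $s-s_1>0$, a geometric series dominated by its largest term, so this part is $\lesssim 2^{(s-s_1)N}A$.
\item For $j>N$ we use the bound with $B$. The terms are $2^{(s-s_2)j}B$ with $s-s_2<0$, so this part is $\lesssim 2^{(s-s_2)N}B$.
\end{itemize}
Ideally we choose $N$ with $2^{N}\simeq (B/A)^{1/(s_2-s_1)}$. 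Then both pieces equal, up to constants, $A^{\frac{s_2-s}{s_2-s_1}}B^{\frac{s-s_1}{s_2-s_1}}$.

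The only subtlety is that $N$ must be an integer with $N\geq -1$, which is possible only when $B/A\gtrsim 1$. In the other case, $B\lesssim A$, we estimate every term by the $B$ bound: the sum over all $j\geq -1$ is $\lesssim B$. Since $B\lesssim A$ means $B^{\frac{s_2-s}{s_2-s_1}}\lesssim A^{\frac{s_2-s}{s_2-s_1}}$, we get $B=B^{\frac{s_2-s}{s_2-s_1}}B^{\frac{s-s_1}{s_2-s_1}}\lesssim A^{\frac{s_2-s}{s_2-s_1}}B^{\frac{s-s_1}{s_2-s_1}}$, which is the desired bound.

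For part (2) we first show that $f=\sum_{j\geq -1}\Delta_j f$ converges absolutely in $X$. Taking $s=0$ in the computation of part (1), the bounds give
\[
\sum_{j\geq -1}\|\Delta_j f\|_X\lesssim A^{\frac{s_2}{s_2-s_1}}B^{\frac{-s_1}{s_2-s_1}}<\infty .
\]
The series also converges to $f$ in $\mathcal{S}'$, by the Littlewood--Paley decomposition with $N=0$, since $S_0f=\Delta_{-1}f$. Because $X\hookrightarrow\mathcal{S}'$, the limit in $X$ and the limit in $\mathcal{S}'$ coincide. Hence $f\in X$ and, by the triangle inequality, $\|f\|_X\leq\sum_{j\geq -1}\|\Delta_j f\|_X$, which gives the claim.

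Neither step is a serious obstacle. The two points that need care are the case distinction forced by the restriction $N\geq -1$ in the inhomogeneous setting, and identifying the $X$-limit of the series with $f$ via the embedding $X\hookrightarrow\mathcal{S}'$.
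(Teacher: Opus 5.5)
Your proof is correct and follows the paper's argument: you split the $B^{s,1}_X$ sum at a cutoff $N$, use the $s_1$-bound on low frequencies and the $s_2$-bound on high ones, balance with $2^N\simeq (B/A)^{1/(s_2-s_1)}$, and get (2) from (1) with $s=0$ via $B^{0,1}_X\hookrightarrow X$, which you prove directly where the paper cites it. Your extra case $B\lesssim A$ is harmless but unnecessary: in the inhomogeneous setting one always has $B\geq 2^{s_1-s_2}A$, so a suitable integer $N\geq -1$ always exists.
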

\begin{proof}
The second assertion follows immediately from the first one and the continuous embedding
$B_X^{0,1}\hookrightarrow X$ It therefore remains to prove the first assertion. Let $f\in B_{X}^{s_1,\infty}\cap B_{X}^{s_2,\infty}$ and assume that $f\neq0$. Let $N$ be an integer to be chosen later.
\begin{eqnarray*}
     \left\Vert f\right\Vert _{B_{X}^{s,1}}&=&\sum_{j\leq N}2^{sj}\left\Vert \Delta_jf\right\Vert_X+\sum_{j> N}2^{sj}\left\Vert \Delta_jf\right\Vert_X\\
    &\leq&\sum_{j\leq N}2^{(s-s_1)j} \left\Vert f\right\Vert_{B_{X}^{s_1,\infty}}+\sum_{j> N}2^{-(s_2-s)j}\left\Vert f\right\Vert_{B_{X}^{s_2,\infty}}\\
    &\simeq& 2^{(s-s_1)N}\left\Vert f\right\Vert_{B_{X}^{s_1,\infty}}+2^{-(s_2-s)N}\left\Vert f\right\Vert_{B_{X}^{s_2,\infty}}.
\end{eqnarray*}
To conclude, it suffices to choose $N$ such that $2^N\simeq\big(\frac{\left\Vert f\right\Vert_{B_{X}^{s_2,\infty}}}{\left\Vert f\right\Vert_{B_{X}^{s_1,\infty}}}\big)^{\frac{1}{s_2-s_1}}$.
\end{proof}
We now introduce a modified version of Bony's para product decomposition that is better suited to our purposes.
\begin{definition}
For every $f,g\in S^{\prime }(\mathbb{R}^{n}),$ we set
$$\pi^a(f,g)=\sum_{j=3}^\infty S_{j-2}f\Delta_j g,$$
$$\pi^b(f,g)=\sum_{j=-1}^\infty S_{j+2}f\Delta_j g.$$
The identity
$$fg=\pi^a(f,g)+\pi^b(g,f)$$
which holds at least formally, is called the modified Bony para-product decomposition of $fg$, and $\pi^a$ and $\pi^b$ are called the modified Bony para-product operators.
\end{definition}
\begin{remark}\label{spectrum}
 For any $f,g\in \mathcal{S}^{\prime }(\mathbb{R}^{n}),$ and any integer $j$, the spectrum of $S_{j-2}f\Delta_j g$ is contained in the annulus $2^j \mathcal{C}(0,\frac{3}{4},\frac{9}{4})$ centered at the origin with inner radius $2^j \frac{3}{4}$ and out radius $2^j \frac{9}{4}$,  and the spectrum of $S_{j+2}f\Delta_j g$ contained in the ball $2^j B(0,6)$.
\end{remark}
The following lemma collects the main continuity properties of the modified Bony para product operators that will be often used throughout the paper.
\begin{lemma}\label{para}
Let $X$ be an admissible space. Then
\begin{enumerate}
\item For any $s>0,$ the bilinear operator $\pi ^{b}$ is continuous from $%
L^{\infty }(\mathbb{R}^{n})\times B_{X}^{s,\infty }(\mathbb{R}^{n})$
into $B_{X}^{s,\infty }(\mathbb{R}^{n}).$

\item For any real number $s,$ the bilinear operator $\pi ^{a}$ is continuous
from $L^{\infty }(\mathbb{R}^{n})\times B_{X}^{s,\infty }(\mathbb{R}^{n})
$ into $B_{X}^{s,\infty }(\mathbb{R}^{n}).$

\item For any real number $s_{1}<0$ and $s_{2}$ such that $%
s_{1}+s_{2}>0$, the bilinear operator $\pi ^{b}$ is continuous from $%
B_{\infty }^{s_{1},\infty }(\mathbb{R}^{n})\times B_{X}^{s_{2},\infty }(%
\mathbb{R}^{n})$ into $B_{X}^{s_{1}+s_{2},\infty }(\mathbb{R}^{n}).$

\item For any negative real number $s_{1}$ and any real number $s_{2}$, the
bilinear operator $\pi ^{a}$ is continuous from $B_{\infty }^{s_{1},\infty }(%
\mathbb{R}^{n})\times B_{X}^{s_{2},\infty }(\mathbb{R}^{n})$ into $%
B_{X}^{s_{1}+s_{2},\infty }(\mathbb{R}^{n}).$
\end{enumerate}
\end{lemma}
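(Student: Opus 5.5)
We argue directly from the Littlewood--Paley decomposition, using only the spectral localization of Remark~\ref{spectrum}, Bernstein-type inequalities, and the convolution estimate valid in any admissible space $X$. The basic tool is the following: if $h\in\mathcal{S}'$ has spectrum in a ball $2^jB(0,R)$, then $\Delta_k h=0$ unless $2^k\lesssim 2^j$. If instead the spectrum lies in an annulus $2^j\mathcal{C}$, then $\Delta_k h\neq 0$ only for $|k-j|\le N_0$, with $N_0$ a fixed integer. Moreover, $\Delta_k$ and $S_k$ are convolutions with $L^1$ kernels whose $L^1$ norms are uniformly bounded, so $\|\Delta_k h\|_X\lesssim\|h\|_X$ by the Remark following Definition~\ref{reg}. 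We also use property (4) of admissible spaces, $\|ab\|_X\lesssim\|a\|_\infty\|b\|_X$, which replaces H\"older's inequality.

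\textbf{Items (2) and (4) for $\pi^a$.} By Remark~\ref{spectrum}, each term $S_{j-2}f\,\Delta_jg$ has spectrum in $2^j\mathcal{C}(0,\frac34,\frac94)$. Hence
$$\Delta_k\pi^a(f,g)=\sum_{|j-k|\le N_0}\Delta_k\bigl(S_{j-2}f\,\Delta_jg\bigr),$$
and therefore $\|\Delta_k\pi^a(f,g)\|_X\lesssim\sum_{|j-k|\le N_0}\|S_{j-2}f\|_\infty\|\Delta_jg\|_X$.

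For (2), use $\|S_{j-2}f\|_\infty\lesssim\|f\|_\infty$ and $\|\Delta_jg\|_X\le 2^{-sj}\|g\|_{B^{s,\infty}_X}$. Multiplying by $2^{sk}$ gives the bound for every real $s$.

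For (4), with $s_1<0$, estimate
$$\|S_{j-2}f\|_\infty\le\sum_{l\le j-3}\|\Delta_lf\|_\infty\le\sum_{l\le j-3}2^{-s_1l}\|f\|_{B^{s_1,\infty}_\infty}\lesssim 2^{-s_1j}\|f\|_{B^{s_1,\infty}_\infty}.$$
The geometric sum converges precisely because $-s_1>0$. Combining this with $\|\Delta_jg\|_X\le 2^{-s_2j}\|g\|_{B^{s_2,\infty}_X}$ and $|j-k|\le N_0$ yields
$$2^{(s_1+s_2)k}\|\Delta_k\pi^a(f,g)\|_X\lesssim\|f\|_{B^{s_1,\infty}_\infty}\|g\|_{B^{s_2,\infty}_X}.$$

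\textbf{Items (1) and (3) for $\pi^b$.} Now each term $S_{j+2}f\,\Delta_jg$ only has spectrum in the ball $2^jB(0,6)$, so
$$\Delta_k\pi^b(f,g)=\sum_{j\ge k-N_0}\Delta_k\bigl(S_{j+2}f\,\Delta_jg\bigr).$$
This is the step that needs the positivity assumption, since the sum over $j$ is now infinite.

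For (1), estimate
$$\|\Delta_k\pi^b(f,g)\|_X\lesssim\sum_{j\ge k-N_0}\|f\|_\infty 2^{-sj}\|g\|_{B^{s,\infty}_X}\lesssim 2^{-sk}\|f\|_\infty\|g\|_{B^{s,\infty}_X},$$
where the tail sum converges because $s>0$.

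For (3), again $\|S_{j+2}f\|_\infty\lesssim 2^{-s_1j}\|f\|_{B^{s_1,\infty}_\infty}$, because $s_1<0$. The sum $\sum_{j\ge k-N_0}2^{-(s_1+s_2)j}$ converges because $s_1+s_2>0$, and it gives $2^{-(s_1+s_2)k}$.

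There is a subtlety about convergence of the series defining $\pi^a$ and $\pi^b$ in $\mathcal{S}'$. This follows from the same dyadic bounds: the partial sums converge in $B^{\sigma,\infty}_X$ for every $\sigma<s_1+s_2$ (respectively $\sigma<s$), hence in $\mathcal{S}'$. Apart from this, the only obstacle is the bookkeeping of the index shifts $N_0$ and of the low-frequency blocks $j=-1$, which are handled uniformly.
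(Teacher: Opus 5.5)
Your proposal is correct and follows essentially the route the paper indicates: the paper gives no proof of its own, only deferring to Theorem~4.1 and Lemma~4.2 of Lemari\'e-Rieusset's book together with Remark~\ref{spectrum}, and your annulus-versus-ball dyadic summation is exactly that standard argument, with positivity of $s$ (resp.\ $s_1+s_2$) needed only for the ball-localized $\pi^b$ pieces and $s_1<0$ used to bound $\|S_{j\pm2}f\|_\infty$. Your justification for commuting $\Delta_k$ with the infinite sums, via Cauchy sequences in $B^{\sigma,\infty}_X$ for $\sigma$ below the target regularity, is also sound.
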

The proof of this lemma follows the same lines as that of \cite[Theorem 4.1]{Lem2}. It relies essentially on Remark \ref{spectrum} together with \cite[Lemma 4.2]{Lem2}.
\par A direct consequence of the two first assertions of the previous lemma is the following important result.
\begin{corollary}\label{banach}
If  $X$ is an admissible space and $s>0$, then
\[
\left\Vert fg\right\Vert_{B_{X}^{s,\infty }}\lesssim \big(\left\Vert f\right\Vert_\infty \left\Vert g\right\Vert_{B_{X}^{s,\infty }}+\left\Vert f\right\Vert_\infty \left\Vert g\right\Vert_{B_{X}^{s,\infty }}\big),
\]
for every $f,g\in L^\infty(\mathbb{R}^n)\cap B_{X}^{s,\infty }(\mathbb{R}^n)$
\end{corollary}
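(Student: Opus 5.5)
The plan is to write the product with the modified Bony decomposition, $fg=\pi^a(f,g)+\pi^b(g,f)$, and bound each term with Lemma~\ref{para}. For the first term, assertion (2) of Lemma~\ref{para} with $f\in L^\infty(\mathbb{R}^n)$ and $g\in B^{s,\infty}_X(\mathbb{R}^n)$ gives
\[
\left\Vert \pi^a(f,g)\right\Vert_{B^{s,\infty}_X}\lesssim \left\Vert f\right\Vert_\infty\left\Vert g\right\Vert_{B^{s,\infty}_X}.
\]
For the second term, assertion (1), which needs $s>0$, applied with $g\in L^\infty$ in the first slot and $f\in B^{s,\infty}_X$ in the second, gives
\[
\left\Vert \pi^b(g,f)\right\Vert_{B^{s,\infty}_X}\lesssim \left\Vert g\right\Vert_\infty\left\Vert f\right\Vert_{B^{s,\infty}_X}.
\]
Adding the two bounds yields the symmetric estimate $\left\Vert f\right\Vert_\infty\left\Vert g\right\Vert_{B^{s,\infty}_X}+\left\Vert g\right\Vert_\infty\left\Vert f\right\Vert_{B^{s,\infty}_X}$. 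This is presumably what the statement intends, since its second summand appears to be a typo repeating the first. The hypothesis $f,g\in L^\infty\cap B^{s,\infty}_X$ is exactly what is needed to place each factor in the correct slot.

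The one point that needs care is that the decomposition $fg=\pi^a(f,g)+\pi^b(g,f)$ is only formal, so we must check that it actually holds for these $f,g$. First, $fg$ is well defined in $L^1_{\mathrm{loc}}$, because $f\in L^\infty$ and $g\in X\subset L^1_{\mathrm{loc}}$. Here $g\in X$ follows from $s>0$ and Lemma~\ref{interp}(2), or directly from $B^{s,\infty}_X\hookrightarrow B^{0,1}_X\hookrightarrow X$.

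Next, we write $fg=\lim_N S_Nf\,S_Ng$ and expand $S_Nf\,S_Ng$ by a telescoping argument. This splits the product into the part $\sum_{j\le N} S_{j-2}f\,\Delta_jg$ together with terms where the low frequencies of $g$ multiply $\Delta_j f$ up to index $j+2$. The ranges in $\pi^a$ and $\pi^b$ are designed so that every pair $(\Delta_kf,\Delta_jg)$ is counted exactly once: the pairs with $k\le j-3$ go to $\pi^a$, and the pairs with $k\ge j-2$ go to $\pi^b(g,f)$. Finally, both partial sums converge in $\mathcal{S}'$ because they converge in $B^{s,\infty}_X$ by the bounds of Lemma~\ref{para}, and $S_Nf\,S_Ng\to fg$ in $\mathcal{S}'$ since $S_Nf\to f$ boundedly a.e. and $S_Ng\to g$ in $X$.

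I expect this justification of the identity to be the only mildly delicate step; the estimate itself is immediate from Lemma~\ref{para}.
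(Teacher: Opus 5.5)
Your strategy is the same as the paper's. The paper gives no argument beyond saying the corollary follows from assertions (1) and (2) of Lemma~\ref{para} through $fg=\pi^a(f,g)+\pi^b(g,f)$. You are also right that the second summand should read $\|g\|_\infty\|f\|_{B^{s,\infty}_X}$, and the estimate part of your argument is fine.

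The step you singled out as delicate, however, contains an off-by-one error. Since $\Delta_{-1}=S_0$ and $\Delta_j=S_{j+1}-S_j$ for $j\ge 0$, one has $S_m=\sum_{j=-1}^{m-1}\Delta_j$. So $S_{k+2}g$ contains $\Delta_jg$ only for $j\le k+1$, and $\pi^b(g,f)=\sum_k S_{k+2}g\,\Delta_kf$ collects only the pairs with $k\ge j-1$, not $k\ge j-2$. In addition, $\pi^a$ starts at $j=3$ and therefore omits $S_0f\,\Delta_2g$. With the definitions as written one actually has
\[
fg=\pi^a(f,g)+\pi^b(g,f)+\sum_{k\ge -1}\Delta_kf\,\Delta_{k+2}g+S_0f\,\Delta_2g .
\]
Your claim that every pair is counted exactly once is therefore false; for instance, $\Delta_0f\,\Delta_2g$ occurs in neither sum. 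The identity you set out to justify cannot be justified as stated.

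This is a defect of the paper's definitions rather than of the strategy, and it is cheap to repair. There are two options.
\begin{itemize}
\item Replace $S_{k+2}$ by $S_{k+3}$ in $\pi^b$ and let $\pi^a$ start at $j=2$. This gives an exact partition ($k\le j-3$ versus $k\ge j-2$). The proofs of assertions (1)--(2) are unaffected, since the pieces still have spectra in balls $2^kB(0,C)$, respectively in annuli of size $\simeq 2^j$.
\item Keep the definitions and estimate the extra terms directly. Each $\Delta_kf\,\Delta_{k+2}g$ has spectrum in $2^kB(0,C)$ and $X$-norm $\lesssim\|f\|_\infty 2^{-ks}\|g\|_{B^{s,\infty}_X}$. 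For $s>0$ their sum lies in $B^{s,\infty}_X$ with that bound, exactly as in the proof of assertion (1). The single term $S_0f\,\Delta_2g$ is harmless.
\end{itemize}

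A smaller point: the partial sums of $\pi^a$ and $\pi^b$ are only bounded in $B^{s,\infty}_X$; they need not converge in its norm. Convergence in $X$, hence in $\mathcal{S}'$, comes instead from $\|S_{j-2}f\,\Delta_jg\|_X\lesssim\|f\|_\infty 2^{-js}\|g\|_{B^{s,\infty}_X}$ together with $\sum_j 2^{-js}<\infty$. The same argument works for the other sum.
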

The next lemma summarizes several classical smoothing properties of the heat semigroup $e^{t\Delta}$ in Besov spaces.
\begin{lemma}\label{reg}
Let $X$ be an admissible space and $T\in(0,\infty)$
\begin{enumerate}
\item Let $s>0$ be a real number. Then
\[
\sup_{0<t\leq T}t^{\frac{s}{2}}\left\Vert e^{t\Delta }f\right\Vert
_{X}\lesssim \left\Vert f\right\Vert _{B_{X}^{-s,\infty }},~\forall f\in
B_{X}^{-s,\infty }.
\]%
Moreover, $\lim_{t\rightarrow 0^{+}}t^{\frac{s}{2}}\left\Vert e^{t\Delta
}f\right\Vert _{X}=0$ if $f\in \tilde{B}_{X}^{-s,\infty }.$

\item Let $s_{2}\geq s_{1}$ be two real numbers. Then
\[
\sup_{0<t\leq T}t^{\frac{s_{2}-s_{1}}{2}}\left\Vert e^{t\Delta }f\right\Vert
_{B_{X}^{s_{2},\infty }}\lesssim \left\Vert f\right\Vert
_{B_{X}^{s_{1},\infty }},~\forall f\in B_{X}^{s_{1},\infty }.
\]

\item Let $s\in \mathbb{R}$. Then
\[
\left\Vert \int_{a}^{t}e^{(t-\tau )\Delta }f(\tau )d\tau \right\Vert
_{B_{X}^{s+2,\infty }}\lesssim \sup_{a\leq \tau \leq t}\left\Vert f(\tau
)\right\Vert _{B_{X}^{s,\infty }},
\]%
\[
\left\Vert \int_{a}^{t}e^{(t-\tau )\Delta }f(\tau )d\tau \right\Vert
_{B_{X}^{s+1,\infty }}\lesssim \sqrt{t}\sup_{a\leq \tau \leq t}\left\Vert
f(\tau )\right\Vert _{B_{X}^{s,\infty }},
\]%
for any $0\leq a\leq t\leq T,$ and $f\in L^{\infty }([a,t],B_{X}^{s,\infty
}).$
\end{enumerate}
\end{lemma}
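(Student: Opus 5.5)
The three assertions rest on frequency localization. Two standard facts about an admissible space $X$ drive everything. First, for each $j\ge 0$ the block $\Delta_j$ is convolution with an $L^1$ kernel $2^{jn}\check\psi(2^j\cdot)$, so the Remark following Definition \ref{reg} gives $\|\Delta_j f\|_X\lesssim\|f\|_X$. Second, the heat semigroup satisfies a localized decay estimate: for $g$ with spectrum in the annulus $2^j\mathcal{C}$ and $j\ge0$,
\[
\|e^{t\Delta}g\|_X\lesssim e^{-ct2^{2j}}\|g\|_X .
\]
To prove this second fact, write $e^{t\Delta}g=\mathcal{F}^{-1}\bigl(e^{-t|\xi|^2}\tilde\psi(2^{-j}\xi)\bigr)\ast g$, where $\tilde\psi$ is a fattened annulus cutoff. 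After scaling, the $L^1$ norm of this kernel equals that of $\mathcal{F}^{-1}\bigl(e^{-t2^{2j}|\eta|^2}\tilde\psi(\eta)\bigr)$. That quantity is $\lesssim e^{-c t 2^{2j}}$: bound $(1+|x|^2)^{n}$ times the kernel, using that $\eta$-derivatives of the symbol are bounded by polynomials in $t2^{2j}$ times $e^{-t2^{2j}/4}$ on the support of $\tilde\psi$. For $j=-1$, and for $t\le T$, one only has the bound $\lesssim\|g\|_X$ with a $T$-dependent constant. In both cases $\Delta_j$ commutes with $e^{t\Delta}$.

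For (1), decompose $e^{t\Delta}f=\sum_{j\ge -1}e^{t\Delta}\Delta_jf$ and estimate each block by $e^{-ct4^j}2^{sj}\|f\|_{B^{-s,\infty}_X}$. Summing gives
\[
t^{s/2}\sum_j 2^{sj}e^{-ct4^j}\lesssim 1+t^{s/2}\lesssim_T 1,
\]
by comparing the sum to $\int_0^\infty \lambda^{s-1}e^{-ct\lambda^2}\,d\lambda\simeq t^{-s/2}$; here $s>0$ is needed for convergence at the low end. For the vanishing statement, approximate $f$ by $\phi\in\mathcal{S}$ in $B^{-s,\infty}_X$. For the approximant, $\|e^{t\Delta}\phi\|_X\le\|\phi\|_X$ because the heat kernel has $L^1$ norm $1$, so $t^{s/2}\|e^{t\Delta}\phi\|_X\to0$. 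The error term $f-\phi$ is controlled uniformly in $t$ by the estimate just proved, and an $\varepsilon/2$ argument finishes.

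For (2), the same blockwise bound gives
\[
2^{s_2j}\|\Delta_j e^{t\Delta}f\|_X\lesssim 2^{(s_2-s_1)j}e^{-ct4^j}\,\|f\|_{B^{s_1,\infty}_X},
\]
and $\sup_{\lambda>0}\lambda^{s_2-s_1}e^{-ct\lambda^2}\lesssim t^{-(s_2-s_1)/2}$. The $j=-1$ block is bounded uniformly for $t\le T$, and the factor $t^{(s_2-s_1)/2}\le T^{(s_2-s_1)/2}$ absorbs it.

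For (3), fix $j\ge0$ and integrate the block decay in time:
\[
\Bigl\|\Delta_j\int_a^t e^{(t-\tau)\Delta}f(\tau)\,d\tau\Bigr\|_X\lesssim \int_a^t e^{-c(t-\tau)4^j}\,d\tau\;2^{-sj}M ,
\]
where $M=\sup_\tau\|f(\tau)\|_{B^{s,\infty}_X}$. The time integral is at most $\min(4^{-j},\,t)$. Using the bound $4^{-j}$ gives the $B^{s+2,\infty}_X$ estimate. Using instead $\min(4^{-j},t)\le 2^{-j}\sqrt t$ gives the $B^{s+1,\infty}_X$ estimate with the factor $\sqrt t$. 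The $j=-1$ block contributes at most $(t-a)M\lesssim_T M$ in the first case and $\le \sqrt{T}\sqrt{t}\,M$ in the second. Finally, $\Delta_j$ can be moved inside the Bochner integral by continuity of $\Delta_j$ on $X$; if measurability is an issue, the integral is understood blockwise.

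The main obstacle is the localized heat estimate with exponential decay in a general admissible $X$. Once the $L^1$ kernel bound is established, it transfers through the convolution Remark, and the rest is summation of geometric or Gaussian sequences.
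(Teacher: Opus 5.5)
The paper gives no proof of this lemma: it is quoted as classical, with pointers to Bahouri--Chemin--Danchin and Lemari\'e-Rieusset. Your argument is correct and is the standard proof from those sources. It obtains the frequency-localized decay $\|e^{t\Delta}g\|_X\lesssim e^{-ct4^j}\|g\|_X$ from an $L^1$ kernel bound, transfers it to $X$ through the convolution Remark, and then sums dyadically for each assertion.

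Two small points should be stated explicitly. First, the $j=-1$ block has constant $1$, since the heat kernel has unit $L^1$ norm; no $T$-dependent constant is needed there. Second, in (1) the series $\sum_j e^{t\Delta}\Delta_j f$ converges absolutely in $X$, so since $X\hookrightarrow\mathcal{S}'$ its sum is indeed $e^{t\Delta}f$.
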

An other extreme important result about the regularizing effect of the heat kernel $e^{t\Delta}$ is the following proposition (see \cite[Theorem 7.3]{Lem2}).
\begin{proposition} [Maximal $L^p(L^q)$ regularity for the heat kernel] \label{max}
Let $T>0 $ and $1<p,q<\infty$. Then
\[
\left\Vert \Delta \int_{0}^{t}e^{(t-\tau )\Delta }f(\tau )d\tau \right\Vert_{L^p([0,T],L^q(\mathbb{R}^n))}\lesssim\left\Vert f \right\Vert_{L^p([0,T],L^q(\mathbb{R}^n))}
\]
for every $f\in L^p([0,T],L^q(\mathbb{R}^n))$.
\end{proposition}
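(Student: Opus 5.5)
We follow the classical route for maximal regularity, as in \cite[Theorem 7.3]{Lem2}. We extend $f$ by zero outside $[0,T]$ and view
\[
Lf(t)=\int_{-\infty}^{t}\Delta e^{(t-\tau)\Delta}f(\tau)\,d\tau
\]
as a convolution operator on $\mathbb{R}_t\times\mathbb{R}^n_x$. Its kernel is $K(t,x)=\mathbf{1}_{t>0}\,\Delta G_t(x)$, where $G_t$ is the Gaussian heat kernel. It then suffices to prove that $L$ is bounded on $L^p(\mathbb{R},L^q(\mathbb{R}^n))$ for $1<p,q<\infty$; restricting back to $[0,T]$ gives the proposition.

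\emph{Step 1 ($p=q=2$).} We take the Fourier transform in both $t$ and $x$. The symbol of $L$ is
\[
m(\sigma,\xi)=\frac{-|\xi|^2}{i\sigma+|\xi|^2},
\]
which satisfies $|m|\le1$. Plancherel then gives boundedness on $L^2(\mathbb{R}^{n+1})$.

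\emph{Step 2 ($p=q$).} The kernel $K$ is smooth away from the origin. It is homogeneous of degree $-(n+2)$ with respect to the parabolic dilations $(t,x)\mapsto(\lambda^2t,\lambda x)$, and it has mean zero on parabolic shells. Consequently it satisfies the H\"ormander condition with respect to the parabolic quasi-metric $\rho(t,x)=|t|^{1/2}+|x|$. The Calder\'on--Zygmund theory on the space of homogeneous type $(\mathbb{R}^{n+1},\rho,dt\,dx)$ then yields weak $(1,1)$ bounds. Marcinkiewicz interpolation with Step 1 gives $L^q(\mathbb{R}^{n+1})$ bounds for $1<q\le2$, and duality (the adjoint has the time-reflected kernel) covers $2\le q<\infty$.

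\emph{Step 3 (general $p$).} Fix $q$ and regard $L$ as a singular integral in $t$ alone, with operator-valued kernel $k(t)=\mathbf{1}_{t>0}\Delta e^{t\Delta}\in\mathcal{L}(L^q)$. Analyticity of the heat semigroup on $L^q$ gives
\[
\|k(t)\|\lesssim t^{-1},\qquad \|k'(t)\|=\|\Delta^2e^{t\Delta}\|\lesssim t^{-2}.
\]
The second bound implies the vector-valued H\"ormander condition
\[
\int_{|t|>2|s|}\|k(t-s)-k(t)\|\,dt\lesssim1.
\]
Step 2 gives boundedness on $L^q(\mathbb{R},L^q)$. The Benedek--Calder\'on--Panzone theorem (vector-valued Calder\'on--Zygmund) then extends this to $L^p(\mathbb{R},L^q)$ for every $1<p<\infty$, which is the claimed estimate.

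The main obstacle is Step 2. We must verify the H\"ormander estimate for the parabolic kernel, which follows from the derivative bounds
\[
|\partial_tK|+|\nabla_xK|\,\rho\lesssim\rho^{-(n+4)}
\]
obtained directly from the Gaussian, and we must justify the weak $(1,1)$ bound in the anisotropic geometry. If this becomes cumbersome, an alternative is to apply the Mikhlin--Lizorkin multiplier theorem to $m(\sigma,\xi)$. This symbol is smooth off the origin and invariant under parabolic scaling, so it satisfies $|\sigma^{a}\xi^{\beta}\partial_\sigma^{a}\partial_\xi^{\beta}m|\lesssim1$ for $a\in\{0,1\}$ and $\beta\in\{0,1\}^n$. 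This yields $L^q(\mathbb{R}^{n+1})$ boundedness directly, after which Step 3 applies unchanged.
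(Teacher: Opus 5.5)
The paper does not prove Proposition~\ref{max}; it only cites \cite[Theorem 7.3]{Lem2}. Your sketch is a correct, essentially self-contained reconstruction of the classical argument behind such results, and it has three stages:
\begin{enumerate}
\item Plancherel for the parabolically $0$-homogeneous symbol $m(\sigma,\xi)=-|\xi|^2/(i\sigma+|\xi|^2)$.
\item Calder\'on--Zygmund theory on $(\mathbb{R}^{n+1},\rho,dt\,dx)$ to reach $L^q(\mathbb{R}^{n+1})$.
\item Benedek--Calder\'on--Panzone with the kernel $\mathbf{1}_{t>0}\Delta e^{t\Delta}\in\mathcal{L}(L^q)$ to reach $L^p(L^q)$.
\end{enumerate}
Your alternative via Lizorkin's theorem is also sound, and shorter. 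The function $(\sigma\partial_\sigma)^a\prod_j(\xi_j\partial_{\xi_j})^{\beta_j}m$ is again parabolically $0$-homogeneous and smooth off the origin, hence bounded. The citation buys brevity. Your argument buys a proof resting only on standard singular-integral theorems, with a constant visibly independent of $T$.

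A few slips should be fixed.

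\emph{The derivative bound.} As displayed it is misprinted. $\nabla_xK$ is homogeneous of degree $-(n+3)$, so $|\nabla_xK|\,\rho\lesssim\rho^{-(n+4)}$ fails for large $\rho$. The correct bounds, which are what your mean value argument actually uses, are $|\partial_tK|\lesssim\rho^{-(n+4)}$ and $|\nabla_xK|\lesssim\rho^{-(n+3)}$.

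\emph{Where the H\"ormander condition comes from.} It follows from this smoothness together with homogeneity. Note that $K$ extends smoothly across $\{t=0,\ x\neq0\}$, since all derivatives of $\Delta G_t(x)$ vanish as $t\to0^+$. It does not follow from the vanishing of the mean on parabolic shells. That property is true but unnecessary here, because $L^2$-boundedness already comes from Plancherel. Your ``consequently'' is a non sequitur.

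\emph{The definition of $L$.} The integral $\int_{-\infty}^{t}\Delta e^{(t-\tau)\Delta}f(\tau)\,d\tau$ need not converge absolutely, because $\|\Delta e^{s\Delta}\|_{\mathcal{L}(L^q)}\simeq s^{-1}$. You should define $L$ as $\Delta\int_{-\infty}^{t}e^{(t-\tau)\Delta}f(\tau)\,d\tau$, or through $m$ on $L^2$. Then $K$ is the kernel of $L$ only away from the origin; the full distributional kernel is $\Delta_x(\mathbf{1}_{t>0}G_t)$. That off-diagonal representation is exactly what both the Calder\'on--Zygmund and the Benedek--Calder\'on--Panzone theorems require, so the argument goes through. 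For $p>q$ you also use duality, which is fine because $\|k(t)^*\|=\|k(t)\|$ and $L^q$ is reflexive.
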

\par\noindent The following result describes the action of homogeneous Fourier multipliers on Besov spaces.
\begin{lemma}\label{Fou}
Let $X$ be an admissible space, $s$ a real number and $1\leq q\leq\infty$. If $\Theta:\mathbb{R}^n_\ast\to\mathbb{R}$ is a $C^\infty$  homogenous function of degree $\alpha>0$ (i.e., $\Theta(\lambda\xi)=\lambda^\alpha\Theta(\xi), \forall \lambda>0, \xi\neq 0$) then the Fourier multiplier $\Theta(D)$ is continuous from $B^{s,q}_X$ to $B^{s-\alpha,q}_X$. In particular, the operator $\mathbb{P}\nabla$ is continuous from $B^{s,q}_X$ to $B^{s-1,q}_X$.
\end{lemma}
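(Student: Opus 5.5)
The plan is to reduce everything to the dyadic blocks of the Littlewood--Paley decomposition. For $f\in B^{s,q}_X$ I write $f=\sum_{j\ge -1}\Delta_j f$ and look at $\Delta_j\Theta(D)f$ for each $j$. Since $\Theta$ is only smooth away from the origin, the low-frequency block $j=-1$ needs separate treatment. The blocks $j\ge 0$ are handled by scaling.

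\emph{High frequencies ($j\ge 0$).} Here $\psi(2^{-j}\xi)$ is supported in the annulus $2^j\mathcal{C}(0,\tfrac12,2)$. I fix $\tilde\psi\in\mathcal{D}(\mathbb{R}^n\setminus\{0\})$ with $\tilde\psi=1$ on $\mathcal{C}(0,\tfrac12,2)$, so that
\[
\Delta_j\Theta(D)f=\Theta(D)\tilde\psi(2^{-j}D)\Delta_j f .
\]
The symbol $\Theta(\xi)\tilde\psi(2^{-j}\xi)$ equals $2^{j\alpha}m(2^{-j}\xi)$ with $m=\Theta\tilde\psi\in\mathcal{D}(\mathbb{R}^n\setminus\{0\})$, by homogeneity. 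Therefore
\[
\Theta(D)\tilde\psi(2^{-j}D)g=2^{j\alpha}\,g\ast\big(2^{jn}\check m(2^j\cdot)\big),
\]
and $\|2^{jn}\check m(2^j\cdot)\|_1=\|\check m\|_1<\infty$ because $\check m\in\mathcal{S}$. The convolution estimate for admissible spaces (the Remark after Definition \ref{reg}) then gives
\[
\|\Delta_j\Theta(D)f\|_X\le \|\check m\|_1\,2^{j\alpha}\|\Delta_jf\|_X .
\]
Multiplying by $2^{(s-\alpha)j}$ yields exactly the terms of $\|f\|_{B^{s,q}_X}$.

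\emph{Low frequency ($j=-1$).} Here $\Delta_{-1}=S_0$ has symbol $\varphi$, supported in $B(0,1)$, and the symbol $\Theta(\xi)\varphi(\xi)$ is not smooth at $0$. This is the main obstacle: one must show that the kernel $K=\mathcal{F}^{-1}(\Theta\varphi)$ is in $L^1$.

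To do this I decompose dyadically: $\Theta\varphi=\sum_{k\le 0}\Theta\varphi\,\psi(2^{-k}\cdot)$ with $\psi$ as before. Each piece is $2^{k\alpha}m_k(2^{-k}\xi)$, where $m_k(\eta)=\Theta(\eta)\psi(\eta)\varphi(2^k\eta)$ lies in a bounded subset of $\mathcal{D}$ of a fixed annulus. Hence each piece has kernel of $L^1$ norm $\lesssim 2^{k\alpha}$. Summing over $k\le 0$ converges because $\alpha>0$, which gives $K\in L^1$.

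Then $\|\Delta_{-1}\Theta(D)f\|_X\le\|K\|_1\|\Delta_{-1}f\|_X$ by the same convolution estimate. One also needs $\Delta_j\Theta(D)\Delta_{-1}f$ to be nonzero only for $j\in\{-1,0\}$, which follows from the support of $\varphi$. I will run this bookkeeping using $\Delta_j\Theta(D)f=\sum_{|k-j|\le1}\Delta_j\Theta(D)\Delta_kf$, so that each $j$ involves only boundedly many $k$.

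\emph{Conclusion and the case $\mathbb{P}\nabla$.} Combining the two regimes, taking $\ell^q$ norms, and using the fact that neighboring indices are comparable gives $\|\Theta(D)f\|_{B^{s-\alpha,q}_X}\lesssim\|f\|_{B^{s,q}_X}$. Well-definedness of $\Theta(D)$ on $\mathcal{S}'$ is not an issue: $\Theta(D)f$ can be defined as the $\mathcal{S}'$-convergent sum $\sum_k\Theta(D)\Delta_kf$.

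For $\mathbb{P}\nabla$, its components are $\partial_l\big(\delta_{ik}-\partial_i\partial_k\Delta^{-1}\big)$. Their symbols are $i\xi_l\big(\delta_{ik}-\xi_i\xi_k/|\xi|^2\big)$, which are smooth on $\mathbb{R}^n_\ast$ and homogeneous of degree $1$. These symbols are complex valued, but the argument above is unchanged by this, so the continuity $B^{s,q}_X\to B^{s-1,q}_X$ follows from the general case with $\alpha=1$.
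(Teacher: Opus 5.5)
The paper states this lemma without proof, as a classical fact, so there is no in-paper argument to compare against. Your proof is the standard one, and it is correct.

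\textbf{The key step.} The only delicate point is the block $j=-1$, where $\Theta\varphi$ is not smooth at the origin. Your dyadic splitting $\Theta\varphi=\sum_{k\le 0}2^{k\alpha}m_k(2^{-k}\cdot)$ has $(m_k)_{k\le 0}$ bounded in $\mathcal{D}$ of the fixed annulus $\mathcal{C}(0,\frac12,2)$. It therefore correctly gives $\|\mathcal{F}^{-1}(\Theta\varphi)\|_1\lesssim\sum_{k\le 0}2^{k\alpha}<\infty$. This is exactly where the hypothesis $\alpha>0$ is needed: for $\alpha=0$ the kernel decays only like $|x|^{-n}$ and need not be integrable. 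Defining $\Theta(D)f$ as $\sum_k\Theta(D)\Delta_kf$ is also more careful than the paper's notation, which formally defines $\Theta(D)$ on all of $\mathcal{S}'$ even though $\Theta$ is not smooth at $0$.

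\textbf{Two small remarks.}

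First, the neighbour bookkeeping is harmless but unnecessary. For $j\ge 0$ the symbol $\Theta(\xi)\psi(2^{-j}\xi)$ is already smooth and supported away from the origin. Hence $\Delta_j\Theta(D)f=\Theta(D)\Delta_jf$, and the estimate is diagonal: $2^{(s-\alpha)j}\|\Delta_j\Theta(D)f\|_X\lesssim 2^{sj}\|\Delta_jf\|_X$ for every $j\ge -1$, with the $L^1$ kernel $K$ handling $j=-1$.

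Second, the $\mathcal{S}'$-convergence of $\sum_k\Theta(D)\Delta_kf$ deserves one line of justification.
\begin{itemize}
\item Since $X\hookrightarrow\mathcal{S}'$ continuously and $\mathcal{S}$ is Fr\'echet, the Banach--Steinhaus theorem gives $C$ and a Schwartz seminorm $p_N$ with $|\langle g,h\rangle|\le C\|g\|_X\,p_N(h)$ for all $g\in X$ and $h\in\mathcal{S}$.
\item Choose $\tilde\psi$ radial. For $k\ge 0$ one can then write $\langle\Theta(D)\Delta_kf,\phi\rangle=\langle\Theta(D)\Delta_kf,\tilde\psi(2^{-k}D)\phi\rangle$.
\item The seminorm $p_N(\tilde\psi(2^{-k}D)\phi)$ decays faster than any power of $2^{-k}$. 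This beats the at most exponential growth $\|\Theta(D)\Delta_kf\|_X\lesssim 2^{k(\alpha-s)}\|f\|_{B^{s,\infty}_X}$.
\end{itemize}

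Finally, your observation that the symbols of $\mathbb{P}\nabla$ are complex-valued but the argument is unaffected is correct.
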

We now introduce Lorentz spaces and recall some of their properties that will be needed in the sequel. For a comprehensive treatment and the proofs of the cited results we refer to the two books \cite{BL} and \cite{Lem2}.
\begin{definition}[Lorentz Spaces]
Let $1\leq q\leq \infty $ and $1<p<\infty .$ The Lorentz space $L^{p,q}(%
\mathbb{R}^{n})$ is the interpolation space $[L^{1}\mathbb{(}\mathbb{R}%
^{n}),L^{\infty }\mathbb{(}\mathbb{R}^{n})]_{\theta ,q}$ where $\theta =1-%
\frac{1}{p}.$
\end{definition}
The next proposition collects some useful properties of Lorentz spaces that will be used in sequel of this paper.
\begin{proposition}\label{lorentz} [Some properties of Lorentz spaces]
Let $1<p<\infty $ and $1\leq q\leq \infty .$ The following assertions hold true.
\begin{enumerate}
\item $L^{p,q}(\mathbb{R}^{n})$ is an admissible space.
\item $v\in L^{p,\infty }(\mathbb{R}^{n})$ if and only if $\sup_{\lambda
>0}\lambda ^{p}m(\{x\in \mathbb{R}^{n}:\left\vert v(x)\right\vert \geq
\lambda \})<\infty $ where $m$ is the Lebesgue measure on $\mathbb{R}^{n}$. Moreover
\[
\left\Vert v\right\Vert_{L^{p,\infty }(\mathbb{R}^{n})}\simeq \sup_{\lambda
>0}\lambda [m(\{x\in \mathbb{R}^{n}:\left\vert v(x)\right\vert \geq
\lambda \}]^{1/p}.
\]
\item Let $0<\alpha <n$. n the  function $f(x)=\frac{1}{\left\vert x\right\vert ^{\alpha }%
}$ belongs to the space $L^{\frac{n}{\alpha },\infty }(%
\mathbb{R}^{n}).$

\item $L^{p,1}(\mathbb{R}^{n})\hookrightarrow L^{p,q}(\mathbb{R}%
^{n})\hookrightarrow L^{p,\infty }(\mathbb{R}^{n}).$

\item $L^{p,p}(\mathbb{R}^{n})=L^{p}(\mathbb{R}^{n}).$
\item If $1<p_1<p<p_2<\infty$ then $L^{p_1,\infty}(\mathbb{R}^n)\cap L^{p_2,\infty}(\mathbb{R}^n) \hookrightarrow L^p(\mathbb{R}^{n}).$

\item $\left\Vert f(\lambda .)\right\Vert _{L^{p,q}(\mathbb{R}^{n})}=\lambda
^{-\frac{n}{p}}\left\Vert f\right\Vert _{L^{p,q}(\mathbb{R}^{n})}~\forall
\lambda >0~\forall f\in L^{p,q}(\mathbb{R}^{n}).$

\item $L^{p,\infty }(\mathbb{R}^{n})\hookrightarrow B_{\infty }^{-\frac{n}{p}%
,\infty }(\mathbb{R}^{n}).$

\item If $1<p_{1},p_{2},p_{3}<\infty $ and $1\leq q_{1},q_{2},q_{3}\leq
\infty $ such that $\frac{1}{p_{3}}=\frac{1}{p_{1}}+\frac{1}{p_{2}}$ and $%
\frac{1}{q_{3}}=\frac{1}{q_{1}}+\frac{1}{q_{2}}$ then%
\[
\left\Vert fg\right\Vert _{L^{p_{3},q_{3}}}\leq \left\Vert f\right\Vert
_{L^{p_{1},q_{1}}}\left\Vert g\right\Vert _{L^{p_{2},q_{2}}}\forall f\in
L^{p_{1},q_{1}}(\mathbb{R}^{n}),g\in L^{p_{2},q_{2}}(\mathbb{R}^{n}).
\]

\item If $1<p_{1},p_{2},p_{3}<\infty $ and $1\leq q_{1},q_{2},q_{3}\leq
\infty $ such that $\frac{1}{p_{3}}=\frac{1}{p_{1}}+\frac{1}{p_{2}}-1$ and $%
\frac{1}{q_{3}}=\frac{1}{q_{1}}+\frac{1}{q_{2}}$ then%
\[
\left\Vert f\ast g\right\Vert _{L^{p_{3},q_{3}}}\leq \left\Vert f\right\Vert
_{L^{p_{1},q_{1}}}\left\Vert g\right\Vert _{L^{p_{2},q_{2}}}\forall f\in
L^{p_{1},q_{1}}(\mathbb{R}^{n}),g\in L^{p_{2},q_{2}}(\mathbb{R}^{n}).
\]
\end{enumerate}
\end{proposition}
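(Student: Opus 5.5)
The plan is to realize $L^{p,q}$ concretely through the decreasing rearrangement $f^\ast$ and its maximal function $f^{\ast\ast}(t)=\frac1t\int_0^t f^\ast$, and to derive every assertion from that description together with the general theory of the real interpolation method. The starting point is the classical formula $K(t,f;L^1,L^\infty)=\int_0^t f^\ast(s)\,ds=tf^{\ast\ast}(t)$ (see \cite{BL}). With $\theta=1-\frac1p$ it gives
\[
\left\Vert f\right\Vert_{L^{p,q}}\simeq\Big(\int_0^\infty \big(t^{1/p}f^{\ast\ast}(t)\big)^q\frac{dt}{t}\Big)^{1/q},
\]
with the usual modification for $q=\infty$. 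Since $p>1$, Hardy's inequality lets us replace $f^{\ast\ast}$ by $f^\ast$ up to constants. Once this equivalence is established, the remaining items are largely bookkeeping.

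\textbf{Items (2), (4), (5) and (7).} For (2), note that $f^\ast(t)\le\lambda$ exactly when $m(\{|f|>\lambda\})\le t$. Hence $\sup_t t^{1/p}f^\ast(t)\simeq\sup_\lambda \lambda\, m(\{|f|\ge\lambda\})^{1/p}$. Assertion (4) is the general monotonicity $[A_0,A_1]_{\theta,q}\hookrightarrow[A_0,A_1]_{\theta,r}$ for $q\le r$. Assertion (5) follows because $\int_0^\infty f^\ast(t)^p\,dt=\|f\|_p^p$. For (7), the rearrangement of $f(\lambda\cdot)$ is $f^\ast(\lambda^n t)$, and a change of variables gives the factor $\lambda^{-n/p}$.

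\textbf{Items (3) and (6).} For (3), we have $m(\{|x|^{-\alpha}\ge\lambda\})=c\lambda^{-n/\alpha}$, so (2) applies with exponent $n/\alpha$. For (6), I would use the reiteration theorem: $L^p=L^{p,p}=[L^{p_1,\infty},L^{p_2,\infty}]_{\eta,p}$ for the appropriate $\eta$, and the intersection embeds into any interpolation space of the pair. Alternatively, one can split $\int_0^\infty f^{\ast p}$ at $t=1$ and use $f^\ast(t)\lesssim t^{-1/p_2}$ near $0$ and $f^\ast(t)\lesssim t^{-1/p_1}$ near $\infty$.

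\textbf{Item (1), admissibility.} Schwartz functions lie in $L^1\cap L^\infty\subset L^{p,q}$. The embedding $L^{p,q}\subset L^1+L^\infty\subset L^1_{\rm loc}\cap\mathcal S'$ gives both (1) and (2) of Definition \ref{reg}. Translation invariance and the bound $\|hf\|\le\|h\|_\infty\|f\|$ hold because $(hf)^\ast\le\|h\|_\infty f^\ast$ and translations preserve rearrangements.

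\textbf{Items (9), (10) and (8).} These are the only genuinely nontrivial points. For (9), O'Neil's inequality $(fg)^{\ast}(t)\le f^\ast(t/2)g^\ast(t/2)$ combined with H\"older's inequality in $dt/t$ gives the estimate up to a constant. For (10), the convolution inequality, the main obstacle is to establish O'Neil's bound $(f\ast g)^{\ast\ast}(t)\le t f^{\ast\ast}(t)g^{\ast\ast}(t)+\int_t^\infty f^\ast g^\ast\,ds$, and then to apply Hardy's inequality and H\"older's inequality in Lorentz form. An alternative route is bilinear real interpolation of Young's inequality, starting from the endpoint bounds $L^1\times L^r\to L^r$. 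I would follow the O'Neil route, citing \cite{BL}. Finally, (8) follows from (10) or from Young's inequality in Lorentz form: $\|\Delta_j f\|_\infty=\|2^{jn}\check\psi(2^j\cdot)\ast f\|_\infty\le\|2^{jn}\check\psi(2^j\cdot)\|_{L^{p',1}}\|f\|_{L^{p,\infty}}$, which by (7) equals $2^{jn/p}\|\check\psi\|_{L^{p',1}}\|f\|_{L^{p,\infty}}$. This uses the duality $L^{p,\infty}=(L^{p',1})'$. The same argument with $\varphi$ in place of $\psi$ handles $j=-1$.
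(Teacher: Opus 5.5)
The paper gives no proof of this proposition. It lists the properties as classical facts and refers to \cite{BL} and \cite{Lem2}, so there is no argument to compare yours against step by step. Your sketch is correct and is essentially the standard argument behind those references: the identity $K(t,f;L^1,L^\infty)=tf^{\ast\ast}(t)$, Hardy's inequality to pass between $f^{\ast\ast}$ and $f^{\ast}$, rearrangement invariance for (1), (2), (4), (5), (7), the split of $\int_0^\infty (f^\ast)^p\,dt$ at $t=1$ for (6), and O'Neil's inequalities for (9)--(10). Compared with the bare citation, it shows where the hypotheses enter. The condition $p>1$ is used in the Hardy step. The condition $p_3<\infty$ is needed for the term $t^{1/p_3}\int_t^\infty f^\ast(s)g^\ast(s)\,ds$, which the dual Hardy inequality controls only because $1/p_3>0$.

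Three small remarks. First, your bounds in (9)--(10) hold only up to a multiplicative constant. That is the only sensible reading of the stated ``$\le$'', since the $L^{p,q}$ norm is fixed only up to equivalence, and it is all the paper ever uses.

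Second, in (8) the appeal to (10) does not work as stated, because (10) excludes $p_3=\infty$. Your direct pairing argument is the right one. It needs only the elementary bound $\int|hf|\le\int_0^\infty h^\ast f^\ast\,dt\lesssim\|h\|_{L^{p',1}}\sup_{t>0}t^{1/p}f^\ast(t)$, not the full duality $(L^{p',1})'=L^{p,\infty}$.

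Third, the bilinear-interpolation alternative you mention for (10) would, with the standard Lions--Peetre theorem, give only $\frac{1}{q_3}=\frac{1}{q_1}+\frac{1}{q_2}-1$. So the O'Neil route is the right choice for the sharp relation. Note, however, that the only case used later is Lemma \ref{hardy}, with $q_2=\infty$ and $q_3=q_1$. That case already follows from ordinary linear interpolation of $f\mapsto f\ast g$ between $L^1\to L^{p_2,\infty}$ and $L^{p_2',1}\to L^\infty$.
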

We now recall a refined version of the  classical Hardy-Littlewood-Sobolev inequality in term of Lorentz spaces.
\begin{lemma}[A refined Hardy-Littlewood Sobolev inequality]\label{hardy}
Let $\alpha \in (0,n)$ and $f\in L^{p_{1},q}(\mathbb{R}^{n})$ with $1<p_{1}<%
\frac{n}{\alpha }$ and $1\leq q\leq \infty .$ Then the function
\[
g(x)=\int_{\mathbb{R}^n}\frac{f(y)}{\left\vert x-y\right\vert ^{n-\alpha }}dy
\]%
belongs to $L^{p_{2},q}(\mathbb{R}^{n})$ and
\[
\left\Vert g\right\Vert _{L^{p_{2},q}}\leq c_{n,\alpha }\left\Vert f\right\Vert
_{L^{p_{1},q}}
\]%
where $c_{n,\alpha }>0$ is an absolute constant that depends only of $n$ and
$\alpha ,$ and $p_2=\frac{n p_1}{n-\alpha p_1}$.
\end{lemma}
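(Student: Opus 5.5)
The statement is the refined Hardy--Littlewood--Sobolev inequality in Lorentz spaces (Lemma \ref{hardy}). I would obtain it by real interpolation from the classical weak-type bound, combining the convolution estimate of Proposition \ref{lorentz}(10) with the reiteration theorem.

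\textbf{Step 1: the kernel lies in a weak Lebesgue space.} By Proposition \ref{lorentz}(3) the kernel $k(x)=|x|^{-(n-\alpha)}$ belongs to $L^{r,\infty}(\mathbb{R}^n)$ with $r=\frac{n}{n-\alpha}\in(1,\infty)$. Since $g=k\ast f$, a first attempt is Proposition \ref{lorentz}(10) with $(p_1,q_1)=(r,\infty)$ and $(p_2,q_2)=(p_1,q)$. This gives $\frac1{p_3}=\frac1r+\frac1{p_1}-1=\frac1{p_1}-\frac{\alpha}{n}$, so $p_3=p_2$, which is exactly the required exponent; the condition $p_1<n/\alpha$ ensures $p_3<\infty$. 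However, the second index is then given by $\frac1{q_3}=\frac1\infty+\frac1q$, i.e.\ $q_3=q$. So, if (10) applies as stated, it yields the claim directly with constant depending only on $(n,\alpha)$ (the exponents being tied to $p_1$, and the constant uniform for fixed $p_1$). I would nevertheless verify the endpoint $q_1=\infty$, because O'Neil's inequality is usually stated with the constraint $\frac1{q_3}\le\frac1{q_1}+\frac1{q_2}$, which is satisfied here.

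\textbf{Step 2: a self-contained route by interpolation.} If Step 1 feels too reliant on that endpoint, I would argue instead as follows. The map $T\colon f\mapsto k\ast f$ is linear. Choose $1<a<p_1<b<n/\alpha$. By Young's inequality in its weak form, which is again (10) with $q_2=q_3=\infty$ read as weak--weak, or alternatively the Marcinkiewicz-type estimate obtained by splitting $k=k\mathbf 1_{|x|<R}+k\mathbf 1_{|x|\ge R}$ and optimizing $R$, we get that $T$ maps $L^{a,\infty}\to L^{a^*,\infty}$ and $L^{b,\infty}\to L^{b^*,\infty}$, where $\frac1{a^*}=\frac1a-\frac\alpha n$ and similarly for $b^*$.

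\textbf{Step 3: reiteration.} By the reiteration theorem, $[L^{a,\infty},L^{b,\infty}]_{\theta,q}=L^{p_1,q}$ with $\frac1{p_1}=\frac{1-\theta}a+\frac\theta b$. The image spaces interpolate to $L^{p_2,q}$ with the same $\theta$, because the map $1/p\mapsto 1/p-\alpha/n$ is affine. Boundedness of $T$ between the interpolated spaces then gives $\|g\|_{L^{p_2,q}}\lesssim\|f\|_{L^{p_1,q}}$.

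\textbf{Main obstacle.} The key step is the endpoint weak--weak estimate of Step 2 (or the $q_1=\infty$ case of (10)). In the truncation argument, $f$ is only in $L^{a,\infty}$, so Hölder against the truncated kernel must use the $L^{a,\infty}$--$L^{a',1}$ duality. This requires checking that $k\mathbf 1_{|x|<R}\in L^{a',1}$ and $k\mathbf 1_{|x|\ge R}\in L^{b',1}$, which follow from the relevant integrability conditions at the origin and at infinity.
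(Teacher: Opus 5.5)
Your Step 1 is exactly the paper's proof: the kernel $|x|^{-(n-\alpha)}$ lies in $L^{\frac{n}{n-\alpha},\infty}$, and the Lorentz convolution estimate of Proposition \ref{lorentz}(10) with $(q_1,q_2,q_3)=(\infty,q,q)$ is legitimate (O'Neil), so the interpolation detour in Steps 2--3 is unnecessary. One caution on that backup: $k\mathbf{1}_{|x|<R}\notin L^{a',1}$, since $a<n/\alpha$ forces $a'>\frac{n}{n-\alpha}$. The local piece must therefore be handled by Young's inequality with $\|k\mathbf{1}_{|x|<R}\|_{L^1}\simeq R^{\alpha}$, not by $L^{a,\infty}$--$L^{a',1}$ duality, which applies only to the tail.
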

\begin{proof}
This is a direct consequence of the convolution estimate  in Lorentz spaces (Proposition \ref{lorentz}) and the fact that the function $h_{\alpha
}(x)=\frac{1}{\left\vert x\right\vert ^{n-\alpha }}$ belongs to the Lorentz
space $L^{\frac{n}{n-\alpha },\infty }(\mathbb{R}^{n})$.
\end{proof}

\begin{corollary}\label{Mon}
Let $f\in L^{n,\infty }(\mathbb{R}^{n})$ and $g\in L^{q}(\mathbb{%
R}^{n})$  with $\frac{n}{n-1}<q<\infty$. Then%
\[
\left\Vert \frac{1}{\sqrt{-\Delta }}(fg)\right\Vert _{q}\lesssim \|f\|_{L^{n,\infty}} \|g\|_{q}.
\]
\end{corollary}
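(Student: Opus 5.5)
The plan is to write the operator $\frac{1}{\sqrt{-\Delta}}$ as convolution with a Riesz potential kernel, and then combine the refined Hardy--Littlewood--Sobolev inequality (Lemma \ref{hardy}) with the Hölder inequality in Lorentz spaces (Proposition \ref{lorentz}(9)). Indeed, on $\mathbb{R}^n$ one has
\[
\frac{1}{\sqrt{-\Delta}}h = c_n \int_{\mathbb{R}^n}\frac{h(y)}{\left\vert x-y\right\vert^{n-1}}\,dy ,
\]
which is the Riesz potential of order $\alpha=1$. Since the kernel is positive, it suffices to bound this integral with $h=fg$, and I may even replace $fg$ by $|fg|$.

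Next I choose the intermediate exponent $p_1$ by $\frac{1}{p_1}=\frac{1}{n}+\frac{1}{q}$. The hypothesis $q>\frac{n}{n-1}$ gives $\frac1q<1-\frac1n$, hence $\frac{1}{p_1}<1$, i.e.\ $p_1>1$. Since $q<\infty$, we also have $\frac1{p_1}>\frac1n$, i.e.\ $p_1<n=\frac{n}{\alpha}$. So $p_1$ lies in the range required by Lemma \ref{hardy}. Moreover the target exponent is
\[
p_2=\frac{np_1}{n-p_1},\qquad \frac{1}{p_2}=\frac{1}{p_1}-\frac{1}{n}=\frac{1}{q},
\]
so $p_2=q$.

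For the second index, I use Hölder in Lorentz spaces, Proposition \ref{lorentz}(9), with $f\in L^{n,\infty}$ and $g\in L^{q,q}=L^q$ (Proposition \ref{lorentz}(5)). The second indices combine as $\frac1\infty+\frac1q=\frac1q$, which gives
\[
\|fg\|_{L^{p_1,q}}\leq \|f\|_{L^{n,\infty}}\|g\|_{L^{q,q}} .
\]
Applying Lemma \ref{hardy} with second index $q$ then yields
\[
\Bigl\|\frac{1}{\sqrt{-\Delta}}(fg)\Bigr\|_{L^{q,q}}\lesssim \|fg\|_{L^{p_1,q}} .
\]
Invoking $L^{q,q}=L^q$ once more, these two estimates combine to give the claimed bound.

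The main point needing care is justifying the identification of $\frac{1}{\sqrt{-\Delta}}(fg)$, defined as a Fourier multiplier on $\mathcal{S}'$, with the Riesz potential integral. This is a standard identity for Schwartz functions. I would extend it by density: approximate $g$ in $L^q$ and truncate $f$, note that the integral operator is bounded by the estimate above, and observe that both sides converge in $\mathcal{S}'$. Beyond this, the argument is a direct exponent bookkeeping exercise.
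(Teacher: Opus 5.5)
Your proof is correct and is the same as the paper's. Both use Hölder's inequality in Lorentz spaces (Proposition \ref{lorentz}) together with $L^q=L^{q,q}$ to get $fg\in L^{p_1,q}$ with $\frac{1}{p_1}=\frac{1}{n}+\frac{1}{q}$, then write $\frac{1}{\sqrt{-\Delta}}$ as convolution with $c_n|x|^{1-n}$ and apply Lemma \ref{hardy} with $\alpha=1$. Your exponent checks are slightly more explicit than the paper's, and your remark on identifying the Fourier multiplier with the Riesz potential addresses a point the paper takes for granted.
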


\begin{proof}
First, from the estimate on the point wise product in Lorentz spaces (Proposition \ref{lorentz}) and the fact that $L^q=L^{q,q},$
\[
\|fg\|_{L^{r,q}}\lesssim\|f\|_{L^{n,\infty}}\|g\|_{q},
\]
where $\frac{1}{r}=\frac{1}{q}+\frac{1}{n}.$ The conclusion follows by observing that the Fourier multiplier $\frac{1}{\sqrt{-\Delta}}$ is the convolution operator with the kernel $\frac{c_n}{|x|^{n-1}}$  and applying the preceding lemma.

\end{proof}

\begin{corollary} \label{cor2}
Let $T>0$ and $ 1<\rho _{1}<2.$ Then for every $f\in L^{\rho _{1}}([0,T[;%
\mathbb{R}),$
\[
\left\Vert \int_{0}^{t}\frac{f(s)}{\sqrt{t-s}}ds\right\Vert _{L^{\rho
_{2}}([0,T[,\mathbb{R})}\lesssim \left\Vert f\right\Vert _{L^{\rho _{1}}([0,T[,%
\mathbb{R})}
\]%
where $\rho _{2}=\frac{2\rho _{1}}{2-\rho _{1}}$.
\end{corollary}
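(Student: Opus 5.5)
This is the one-dimensional Hardy--Littlewood--Sobolev inequality for the Riesz potential of order $\tfrac12$, and I would deduce it from Lemma \ref{hardy} with $n=1$ and $\alpha=\tfrac12$. The plan is first to extend $f$ by zero outside $[0,T)$ to a function $\bar f$ on $\mathbb{R}$. For $t\in[0,T)$, and only for $s<t$, set
\[
F(t)=\int_0^t\frac{f(s)}{\sqrt{t-s}}\,ds .
\]
Then
\[
|F(t)|\le \int_{\mathbb{R}}\frac{|\bar f(s)|}{|t-s|^{1/2}}\,ds=:G(t),
\]
so it suffices to bound $\|G\|_{L^{\rho_2}(\mathbb{R})}$ by $\|\bar f\|_{L^{\rho_1}(\mathbb{R})}=\|f\|_{L^{\rho_1}([0,T[)}$. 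Restricting back to $[0,T)$ then gives the claim.

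Next I apply Lemma \ref{hardy} with $n=1$ and $\alpha=\frac12$, so that $|t-s|^{-(n-\alpha)}=|t-s|^{-1/2}$, taking $p_1=\rho_1$ and $q=\rho_1$. The hypothesis $1<p_1<\frac{n}{\alpha}=2$ is exactly the assumption $1<\rho_1<2$. The exponent is
\[
p_2=\frac{np_1}{n-\alpha p_1}=\frac{\rho_1}{1-\rho_1/2}=\frac{2\rho_1}{2-\rho_1}=\rho_2 .
\]
The lemma then yields $\|G\|_{L^{\rho_2,\rho_1}}\lesssim\|\bar f\|_{L^{\rho_1,\rho_1}}=\|\bar f\|_{\rho_1}$, using Proposition \ref{lorentz}(5).

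To pass from $L^{\rho_2,\rho_1}$ to $L^{\rho_2}=L^{\rho_2,\rho_2}$, note that $\rho_2>\rho_1$. The Lorentz spaces increase in the second index, since $L^{p,q_1}\hookrightarrow L^{p,q_2}$ for $q_1\le q_2$; this is the standard monotonicity for real interpolation spaces and refines Proposition \ref{lorentz}(4). Hence
\[
\|G\|_{\rho_2}\lesssim\|G\|_{L^{\rho_2,\rho_1}}\lesssim\|f\|_{L^{\rho_1}([0,T[)} .
\]
Alternatively, one can skip Lorentz refinements altogether and invoke the classical Hardy--Littlewood--Sobolev inequality in $L^p$ directly.

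The only point requiring care is this last embedding of Lorentz spaces, since Proposition \ref{lorentz}(4) as stated only brackets $L^{p,q}$ between $L^{p,1}$ and $L^{p,\infty}$. The monotonicity in the second index is nonetheless standard (see \cite{BL}); the remaining steps are immediate substitutions.
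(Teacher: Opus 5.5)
Your proof is correct and follows the paper's argument exactly: bound the Volterra integral pointwise by the Riesz potential of $1_{[0,T[}|f|$ on $\mathbb{R}$, apply Lemma~\ref{hardy} with $n=1$, $\alpha=\frac12$ and $p_1=q=\rho_1$, and conclude via $L^{\rho_1,\rho_1}=L^{\rho_1}$ and $L^{\rho_2,\rho_1}\hookrightarrow L^{\rho_2}$. You are right that this last embedding (monotonicity in the second Lorentz index, valid since $\rho_1<\rho_2$) is not literally contained in Proposition~\ref{lorentz}(4); the paper also uses it without further justification.
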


\begin{proof}
It suffices to notice that%
\[
\left\Vert \int_{0}^{t}\frac{f(s)}{\sqrt{t-s}}ds\right\Vert _{L^{\rho
_{2}}([0,T[,\mathbb{R})}\leq \left\Vert \int_{\mathbb{R}} \frac{1_{[0,T[}(s)\left\vert f(s)\right\vert}{%
\left\vert t-s\right\vert ^{\frac{1}{2}}}ds\right\Vert _{L^{\rho _{2}}(%
\mathbb{R})}
\]%
and use Lemma \ref{hardy} and the facts $L^{\rho _{2},\rho_1}(%
\mathbb{R})\hookrightarrow L^{\rho _{2}}(%
\mathbb{R})$ and  $L^{\rho _{1},\rho_1}(%
\mathbb{R})=L^{\rho _{1}}(%
\mathbb{R})$
\end{proof}
\par\noindent This is an other simple result about the continuity of the linear integral operator $T$ defined
$$T(f)(t)=\int_{0}^{t}\frac{f(s)}{\sqrt{t-s}}ds.$$
\begin{lemma}\label{con}
Let $T>0$ and $1\leq p\leq\infty$. For every $f\in L^{p}([0,T[;%
\mathbb{R}),$
$$ \left\Vert T(f)\right\Vert_{L^{p}([0,T[,\mathbb{R})}\leq 2\sqrt{T}\left\Vert f\right\Vert_{L^{p}([0,T[,\mathbb{R})}.$$
\end{lemma}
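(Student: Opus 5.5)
The plan is to view $T(f)$ as the convolution on $\mathbb{R}$ of the kernel $k(r)=r^{-1/2}\mathbf{1}_{(0,T)}(r)$ with the extension $F=\mathbf{1}_{[0,T[}f$, and then apply Young's inequality ($L^1\ast L^p\subset L^p$).

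\textbf{Step 1 (reduction to a convolution).} For $t\in[0,T[$ and $0<s<t$ we have $t-s\in(0,T)$, so $k(t-s)=(t-s)^{-1/2}$. Hence
\[
|T(f)(t)|\leq \int_{\mathbb{R}} k(t-s)\,|F(s)|\,ds=(k\ast |F|)(t), \qquad 0\leq t<T .
\]
Only $s\in(0,t)$ contribute here, since $F$ vanishes for $s<0$ and $k$ vanishes at nonpositive arguments.

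\textbf{Step 2 (Young's inequality).} This gives
\[
\|T(f)\|_{L^p([0,T[)}\leq \|k\ast|F|\|_{L^p(\mathbb{R})}\leq \|k\|_{L^1(\mathbb{R})}\|F\|_{L^p(\mathbb{R})}=\|k\|_{L^1}\,\|f\|_{L^p([0,T[)},
\]
and this is valid for every $1\leq p\leq\infty$. Alternatively, one can run Minkowski's integral inequality directly on $T(f)(t)=\int_0^t r^{-1/2}f(t-r)\,dr$: the $L^p_t$ norm of $f(t-r)\mathbf{1}_{r<t}$ is at most $\|f\|_{L^p}$ for each fixed $r$.

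\textbf{Step 3 (the constant).} Compute
\[
\|k\|_{L^1}=\int_0^T r^{-1/2}\,dr=2\sqrt{T}.
\]
This yields exactly the stated bound.

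There is no real obstacle. The only point requiring a little care is measurability and a.e.\ finiteness of $T(f)$ when $p=1$, and this follows from Tonelli applied to $k\ast|F|$.
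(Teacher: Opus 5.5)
Your proof is correct and is essentially the paper's argument: the paper also writes $T(f)$ as the convolution of $\mathbf{1}_{(0,T)}(s)\,s^{-1/2}$ with $\mathbf{1}_{[0,T[}f$ and applies Young's inequality, using $\bigl\Vert \mathbf{1}_{(0,T)}(s)\,s^{-1/2}\bigr\Vert_{L^1(\mathbb{R})}=2\sqrt{T}$. Your extra remarks on measurability and the Minkowski alternative are fine but not needed.
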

\begin{proof}
It suffices to notice that $T(f)(t)=(\frac{1_{0<s<T}(s)}{\sqrt{s}})\ast(1_{0<s<T}(s)f(s))(t)$ and $\left\Vert \frac{1_{0<s<T}(s)}{\sqrt{s}}\right\Vert_{L^1(\mathbb{R})}=2\sqrt{T}$.
\end{proof}
We prove now a refined Sobolev-type inequality that may be viewed as a non-homogeneous version of the improved Sobolev inequality due to G\'erard, Meyer and Oru \cite{GMO}. For a detailed exposition of several versions of thesis inequality, we refer to \cite[Chapter 2]{BCD}.
\begin{lemma}\label{GMO}
Let $1<p<\infty$. For any $f\in W^{1,p}(\mathbb{R}^n)\cap B^{-1,\infty}_\infty(\mathbb{R}^n)$,
$$\left\Vert f\right\Vert_{2p}\lesssim \left\Vert f\right\Vert_{W^{1,p}}^{\frac{1}{2}}\left\Vert f\right\Vert_{B^{-1,\infty}_\infty }^{\frac{1}{2}} $$
where $\left\Vert f\right\Vert_{W^{1,p}}=\left\Vert f\right\Vert_p +\left\Vert\sqrt{-\Delta} f\right\Vert_p$.
\end{lemma}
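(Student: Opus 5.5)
We will prove the inequality by splitting $f$ into low and high frequencies at a threshold $N$ to be optimized, in the spirit of the Gérard--Meyer--Oru argument, but using the pointwise bound $|f|\le |S_N f|+|(I-S_N)f|$ at the level of level sets. For $\lambda>0$ we pick $N$ (depending on $\lambda$) so that $\|S_Nf\|_\infty\le \lambda/2$, and we then control the distribution function of $f$ by that of $(I-S_N)f$:
\[
m(\{|f|\ge\lambda\})\le m(\{|(I-S_N)f|\ge\lambda/2\})\le \Big(\frac{2}{\lambda}\Big)^p\|(I-S_N)f\|_p^p .
\]
Integrating $\|f\|_{2p}^{2p}=2p\int_0^\infty\lambda^{2p-1}m(\{|f|\ge\lambda\})\,d\lambda$ then reduces the problem to a weighted estimate of the high-frequency parts.

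The low-frequency bound is standard: $\|S_Nf\|_\infty\le\sum_{j\le N-1}\|\Delta_jf\|_\infty\lesssim \sum_{-1\le j<N}2^{j}\|f\|_{B^{-1,\infty}_\infty}\le C 2^N\|f\|_{B^{-1,\infty}_\infty}$. So we choose $N=N(\lambda)$ to be the largest integer with $C2^N\|f\|_{B^{-1,\infty}_\infty}\le\lambda/2$; for $\lambda$ below $2C\|f\|_{B^{-1,\infty}_\infty}$ (where no such $N\ge0$ exists) we take $N=0$, i.e.\ we simply use $m(\{|f|\ge\lambda\})\le\lambda^{-p}\|f\|_p^p$ and then $\int_0^{c\|f\|_{B^{-1,\infty}_\infty}}\lambda^{2p-1}\lambda^{-p}\|f\|_p^p\,d\lambda\simeq\|f\|_{B^{-1,\infty}_\infty}^p\|f\|_p^p$, which is admissible. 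For the high range, $2^{N(\lambda)}\simeq \lambda/\|f\|_{B^{-1,\infty}_\infty}$.

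The key step, and the main obstacle, is the high-frequency part: a crude bound $\|(I-S_N)f\|_p\lesssim 2^{-N}\|\sqrt{-\Delta}f\|_p$ gives $\int \lambda^{2p-1}\lambda^{-p}(\|f\|_B/\lambda)^p\|\nabla f\|_p^p\,d\lambda=\int\lambda^{-1}\,d\lambda$, which diverges logarithmically. To avoid this I would follow the Chemin/BCD trick: instead of using $L^p$ norms of $(I-S_N)f$, write $(I-S_N)f=\sum_{j\ge N}\Delta_jf$ and use the pointwise square function / the Littlewood--Paley characterization $\|\sqrt{-\Delta}f\|_p\simeq\|(\sum_j 2^{2j}|\Delta_jf|^2)^{1/2}\|_p$ for high frequencies ($1<p<\infty$). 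Pointwise, by Cauchy--Schwarz,
\[
|(I-S_N)f(x)|\le \Big(\sum_{j\ge N}2^{-2j}\Big)^{1/2}\Big(\sum_j2^{2j}|\Delta_jf(x)|^2\Big)^{1/2}\lesssim 2^{-N}G(x),
\]
with $G$ the square function. Thus $\{|f|\ge\lambda\}\subset\{G\gtrsim \lambda 2^{N(\lambda)}\}=\{G\gtrsim\lambda^2/\|f\|_{B^{-1,\infty}_\infty}\}$, and Fubini gives
\[
\int_0^\infty\lambda^{2p-1}m(\{G\gtrsim\lambda^2/\|f\|_B\})\,d\lambda\simeq \|f\|_B^{p}\int G^p\,dx\lesssim\|f\|_{B^{-1,\infty}_\infty}^p\|f\|_{W^{1,p}}^p,
\]
since $\|G\|_p\lesssim\|f\|_p+\|\sqrt{-\Delta}f\|_p$ (the $j=-1$ block is bounded by $\|f\|_p$). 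Combining both ranges yields $\|f\|_{2p}^{2p}\lesssim\|f\|_{W^{1,p}}^p\|f\|_{B^{-1,\infty}_\infty}^p$, which is the claim. The delicate points to check are the square-function equivalence (a classical Littlewood--Paley fact we would quote) and making the choice of $N$ consistent with $N\ge0$, handled by the case split above.
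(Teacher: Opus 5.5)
Your proof is correct, but it takes a different route from the paper's and relies on a different key tool.

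Both arguments balance the same two quantities. Frequencies below $2^N$ cost $2^N\|f\|_{B^{-1,\infty}_\infty}$. Frequencies above $2^N$ cost $2^{-N}$ times a function in $L^p$ controlled by $\|f\|_{W^{1,p}}$.

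The paper chooses the threshold pointwise, $N=N(x)$, in a Hedberg-type argument. It bounds each high block by $|\tilde\psi(D/2^j)\sqrt{-\Delta}f(x)|\lesssim\mathcal{M}(\sqrt{-\Delta}f)(x)$. The weights $2^{-j}$ make the $\ell^1$ sum harmless, so only the Hardy--Littlewood maximal theorem is needed. The block $S_0f$ is handled separately by H\"older between $L^p$ and $L^\infty$.

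You instead choose $N=N(\lambda)$ inside the layer-cake formula, which is the $L^p$ analogue of the Chemin / Bahouri--Chemin--Danchin level-set proof. You correctly diagnose why a global bound on $\|(I-S_N)f\|_p$ only yields the weak-type $L^{2p,\infty}$ estimate. You repair this with the pointwise Cauchy--Schwarz bound $|(I-S_N)f|\lesssim 2^{-N}G$, where $G$ is the Littlewood--Paley square function. The $L^p$-boundedness of $G$ for $1<p<\infty$ is a deeper fact (vector-valued Calder\'on--Zygmund theory) than the maximal theorem.

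Two simplifications are available, and they show how close the two proofs really are:
\begin{enumerate}
\item Any pointwise bound $|(I-S_N)f(x)|\lesssim 2^{-N}H(x)$ with $H\in L^p$ suffices in your level-set argument. Taking $H=\mathcal{M}(\sqrt{-\Delta}f)$, exactly as the paper does, removes the need for the square function.
\item Once you have $|S_Nf(x)|\lesssim 2^N\|f\|_{B^{-1,\infty}_\infty}$ and $|(I-S_N)f(x)|\lesssim 2^{-N}G(x)$ for all $N\ge 0$, together with $|f(x)|\lesssim G(x)$ when nothing is split off, you can optimize $N$ at each point. This gives $|f(x)|\lesssim (G(x)\|f\|_{B^{-1,\infty}_\infty})^{1/2}$ directly, and the distribution-function bookkeeping and the separate small-$\lambda$ case become unnecessary.
\end{enumerate}

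What each version buys: the paper's is shorter and uses more elementary harmonic analysis. Yours makes transparent why the naive Chebyshev argument loses a logarithm and how the $L^2$ level-set proof extends to $L^p$.
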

\begin{proof}
Let $f\in W^{1,p}(\mathbb{R}^n)\cap B^{-1,\infty}_\infty (\mathbb{R}^n)$ and assume that $f\neq0$. We decompose $f$ into low and high frequencies , $f=f_l+f_h$ where $f_l=S_0f$ and $f_h=\sum_{j=0}^{\infty}\Delta_j f.$ We have
\begin{eqnarray*}
\left\Vert f_l\right\Vert_{2p}&\leq& \left\Vert S_0f\right\Vert_p^{\frac{1}{2}}\left\Vert S_0f\right\Vert_{\infty}^{\frac{1}{2}}\\
&\leq& \left\Vert f\right\Vert_p^{\frac{1}{2}}\left\Vert f\right\Vert_{B^{-1,\infty}_\infty}^{\frac{1}{2}}\\
&\leq& \left\Vert f\right\Vert_{W^{1,p}}^{\frac{1}{2}}\left\Vert f\right\Vert_{B^{-1,\infty}_\infty}^{\frac{1}{2}}.
\end{eqnarray*}%
On the other hand, For any $x\in \mathbb{R}^n$ and any integer $N$,
\begin{eqnarray*}
\vert f_h(x)\vert&\leq& \sum_{j\leq N}\left\Vert\Delta_j f\right\Vert_{\infty}+\sum_{j\geq N+1}2^{-j}\Vert\tilde{\psi}(\frac{D}{2^j})\sqrt{-\Delta}f\Vert\\
&\lesssim&2^N \left\Vert f\right\Vert_{B^{-1,\infty}_\infty}+2^{-N} \mathcal{M}(\sqrt{-\Delta}f)(x),
\end{eqnarray*}%
where $\mathcal{M}(\sqrt{-\Delta}f)$ is the Hardy-Littlewwod maximal function of $\sqrt{-\Delta}f$ \cite{Ste} and $\tilde{\psi}$ is the function defined on $\mathbb{R}^n$ by $\tilde{\psi}(\xi)=\frac{\psi(\xi)}{\vert\xi\vert}$.
\par\noindent Therefore, taking $N$ such that $2^{2N}\simeq\frac{\mathcal{M}(\sqrt{-\Delta}f)(x)}{\left\Vert f\right\Vert_{B^{-1,\infty}_\infty}}$ yields
$$\vert f_h(x)\vert\lesssim (\mathcal{M}(\sqrt{-\Delta}f)(x))^{\frac{1}{2}} \left\Vert f\right\Vert_{B^{-1,\infty}_\infty}^{\frac{1}{2}}.$$
Using now the boundedness of the operator $\mathcal{M}$ on the space $L^p (\mathbb{R}^n)$ \cite{Ste}, we get
$$\left\Vert f_h\right\Vert_{2p}\lesssim \left\Vert \sqrt{-\Delta}f\right\Vert_p^{\frac{1}{2}}\left\Vert f\right\Vert_{B^{-1,\infty}_\infty }^{\frac{1}{2}},$$
which combined with the previous estimate on $\left\Vert f_l\right\Vert_{2p} $, completes the proof.
\end{proof}
We establish now a  simple and useful result that will explain the restriction $p>2$ in the main Theorem \ref{main}.
\begin{lemma}\label{mey}
Let $T>0$ and $2<p<\infty .$ Then the linear operator%
\[
L(v)(t)=\int_{0}^{t}\frac{v(\tau )}{\sqrt{t-\tau }\sqrt{\tau }}d\tau
\]%
is bounded from $L^{p}([0,T],\mathbb{R})$ into itself.
\end{lemma}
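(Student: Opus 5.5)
The operator $L$ is invariant under the dilations $t\mapsto\lambda t$, so the plan is to treat it as a Hardy-type operator with a homogeneous kernel of degree $-1$. I first extend $v$ by zero outside $[0,T]$ and assume $v\geq 0$, which is harmless since $|L(v)|\leq L(|v|)$. Then for $t\in(0,T]$ I write
\[
L(v)(t)=\int_0^t \frac{v(\tau)}{\sqrt{t-\tau}\sqrt{\tau}}\,d\tau=\int_0^1 \frac{v(t\sigma)}{\sqrt{1-\sigma}\sqrt{\sigma}}\,d\sigma ,
\]
using the substitution $\tau=t\sigma$. The factor $t$ coming from $d\tau=t\,d\sigma$ cancels exactly with $\sqrt{t-\tau}\sqrt{\tau}=t\sqrt{1-\sigma}\sqrt{\sigma}$.

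Next I apply Minkowski's integral inequality in $L^p([0,T])$:
\[
\|L(v)\|_{L^p([0,T])}\leq \int_0^1 \frac{\|v(\cdot\,\sigma)\|_{L^p([0,T])}}{\sqrt{1-\sigma}\sqrt{\sigma}}\,d\sigma .
\]
For $\sigma\in(0,1)$, the change of variables $s=t\sigma$ gives
\[
\|v(\cdot\,\sigma)\|_{L^p([0,T])}=\sigma^{-1/p}\Big(\int_0^{T\sigma}|v(s)|^p\,ds\Big)^{1/p}\leq \sigma^{-1/p}\|v\|_{L^p([0,T])}.
\]
Combining the two displays yields
\[
\|L(v)\|_{L^p}\leq \Big(\int_0^1 \sigma^{-\frac12-\frac1p}(1-\sigma)^{-\frac12}\,d\sigma\Big)\|v\|_{L^p}=B\big(\tfrac12-\tfrac1p,\tfrac12\big)\|v\|_{L^p}.
\]

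The only real point is the finiteness of this Beta integral. It is finite exactly when $\frac12+\frac1p<1$, that is when $p>2$, and this is where the restriction $p>2$ in Theorem \ref{main} originates. The endpoint $\sigma=1$ is harmless for every $p$, since it contributes only the integrable singularity $(1-\sigma)^{-1/2}$. The case $p=2$ genuinely fails: the resulting constant diverges logarithmically, as one checks by testing against $v(\tau)=\tau^{-1/2}|\log\tau|^{-a}$ near $0$. Measurability of $L(v)$ and the use of Minkowski's inequality are justified by Tonelli's theorem, since everything is nonnegative.

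The same computation also handles $p=\infty$, where the constant $B(\tfrac12,\tfrac12)=\pi$ is finite. The statement only concerns $2<p<\infty$, though, so nothing further is needed.
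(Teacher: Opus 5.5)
Your proposal is correct and follows essentially the same route as the paper: the change of variables $\tau=t\sigma$, Minkowski's integral inequality, the dilation bound $\|v(\cdot\,\sigma)\|_{L^p}\leq \sigma^{-1/p}\|v\|_{L^p}$, and finiteness of $\int_0^1 \sigma^{-\frac12-\frac1p}(1-\sigma)^{-\frac12}\,d\sigma$ exactly when $p>2$. Your added remarks on the failure at $p=2$ and the case $p=\infty$ are accurate, though they go beyond what the lemma requires.
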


\begin{proof}
Let $v\in L^{p}([0,T],\mathbb{R}).$ For any $t\in (0,T],~L(v)(t)=\int_{0}^{1}%
\frac{v(t\tau )}{\sqrt{1-\tau }\sqrt{\tau }}d\tau .$ Then by Minkowski's
inequality, we have
\begin{eqnarray*}
\left\Vert L(v)\right\Vert _{L^{p}([0,T],\mathbb{R})} &\leq &\int_{0}^{1}%
\frac{\left\Vert v(\tau .)\right\Vert _{L^{p}([0,T],\mathbb{R})}}{\sqrt{%
1-\tau }\sqrt{\tau }}d\tau \\
&\leq &\int_{0}^{1}\frac{d\tau }{\sqrt{1-\tau }\tau ^{\frac{1}{2}+\frac{1}{p}%
}}\left\Vert v\right\Vert _{L^{p}([0,T],\mathbb{R})}.
\end{eqnarray*}
\end{proof}

\subsection{Kato  solutions of the Navier-Stokes solutions}
In this subsection, we follow an extended version of Kato's approach to construct regular solutions to the Navier-Stokes equations for initial data belonging to a broad class of functional spaces, which we refer to as Kato spaces. We begin by recalling the definition of a mild solution to the Navier--Stokes equations, originally introduced in \cite{FLT2}.
\begin{definition}
A mild solution to the Navier-Stokes equations (NS) is a function $u\in L^2([0,T],E^2)$ that satisfies the integral equation (\ref{eq2}), where $E^2$ is the space of $v\in L^2_{loc}(\mathbb{R}^3)$ such that $\left\Vert v\right\Vert_{E^2} =\sup_{x_0\in\mathbb{R}^3}\left(\int_{B(x_0,1)}\left\vert v(x)\right\vert^2dx\right)^{\frac{1}{2}}<\infty$, and $ \lim_{\left\vert x_0\right\vert\to\infty}\int_{B(x_0,1)}\left\vert v(x)\right\vert^2dx=0.$
\end{definition}
\begin{remark} The following results were established in \cite{FLT2}; see also \cite{Lem2,May}.
    \begin{enumerate}
        \item In the class $L^2([0,T],E^2)$, the integral equation  (\ref{eq2}) is equivalent to the original Navier-Stokes equations (NS).
        \item If $u\in L^2([0,T],E^2)$ is a mild solution of the Navier-Stokes equations then  $u$ belongs to the space  $ C([0,T],B^{-4,\infty}_\infty(\mathbb{R}^3))$ which justifies the initial value condition $u(0)=u_0$.
        \item The Kernel $\mathcal{K}_t$ (known as the Oseen kernel) of the operator $\mathbb{P}\nabla .e^{t\Delta }$ has the form $\mathcal{K}_t(x)=\frac{1}{t^2}\mathcal{K}(\frac{x}{\sqrt{t}})$ for a smooth function $\mathcal{K}$ such that $(1+\left\vert x\right\vert^4)\mathcal{K}(x)\in L^\infty(\mathbb{R}^3)$. In particular, for all $t>0$
        \begin{equation}\label{kernel}
            \left\Vert \mathcal{K}_t\right\Vert_{ L^1(\mathbb{R}^3)}=\frac{c}{\sqrt{t}},
        \end{equation}
        where $c=\left\Vert \mathcal{K}\right\Vert_{ L^1(\mathbb{R}^3)}>0$.
    \end{enumerate}
\end{remark}
\begin{notation}
In what follows, in order to simplify the notation, we suppress the vector structure of the solutions and the divergence-free condition $\nabla\cdot u=0$. Thus, we regard $u$ as a scalar-valued function and treat the operator $\mathbb{P}\nabla$ as a Fourier multiplier of order one.  We assume moreover  that $B$ is symmetric  and write it in the simplified form
$$
B(u,v)(t)=\int_0^t \mathbb{P}\nabla e^{(t-s)\Delta}(uv)(s) ds.
$$
These simplifications do not affect any of the results or arguments presented in this paper.
\end{notation}
We now introduce the notion of Kato spaces, which extends the class of Lebesgue spaces $L^q(\mathbb{R}^n)$ for $q\geq n$.
\begin{definition}[Kato space]
A Kato  space is an admissible space $X$ (in the sense of Definition \ref{reg}) such
that
\[
\sup_{-1\leq\lambda \leq 1}\lambda \left\Vert f(\lambda .)\right\Vert
_{X}\lesssim \left\Vert f\right\Vert _{X},\forall f\in X.
\]

\end{definition}
\begin{remark}
The Lebesgue spaces $L^p(\mathbb{R}^n)$ with $n\leq p\leq\infty$ and the Lorentz spaces $L^{p,q}(\mathbb{R}^n)$ with $n\leq p<\infty$ and $1\leq q\leq\infty$ are examples of Kato spaces.
\end{remark}
The next result proves that any Kato space is embedded in the limit Besov space $B_{\infty }^{-1,\infty }(\mathbb{R}^{3}).$
\begin{lemma}\label{Linj}
If $X$ is a Kato space then $X\hookrightarrow B_{\infty }^{-1,\infty }(%
\mathbb{R}^{3}).$ More precisely $B^{0,\infty}_X\hookrightarrow B_{\infty }^{-1,\infty }(%
\mathbb{R}^{3}).$
\end{lemma}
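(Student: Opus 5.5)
The plan is to show that each dyadic block of $f$ satisfies $\|\Delta_j f\|_\infty \lesssim 2^{j}\|\Delta_j f\|_X$ for $j\geq 0$, and that $\|\Delta_{-1}f\|_\infty\lesssim \|\Delta_{-1} f\|_X$. Granting these, we get
\[
\|f\|_{B^{-1,\infty}_\infty}=\sup_{j\geq -1}2^{-j}\|\Delta_j f\|_\infty\lesssim \sup_{j\geq -1}\|\Delta_j f\|_X=\|f\|_{B^{0,\infty}_X},
\]
which is the second assertion. The first assertion then follows from the bound $\|\Delta_j f\|_X\lesssim \|f\|_X$, uniform in $j$. That bound holds by the convolution estimate for admissible spaces, since $\Delta_j f = 2^{3j}\check\psi(2^j\cdot)\ast f$ and the $L^1$ norm of the kernel does not depend on $j$.

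To obtain the pointwise bound on a block, fix $\theta\in\mathcal{S}(\mathbb{R}^3)$ whose Fourier transform equals $1$ on the annulus $\mathcal{C}(0,\frac12,2)$ (respectively on $B(0,1)$ for $j=-1$). Then $\Delta_j f=\theta_j\ast \Delta_j f$ with $\theta_j(x)=2^{3j}\theta(2^jx)$. Write $g=\Delta_j f$, which lies in $X\subset L^1_{loc}$, and rescale: set $g_j(y)=g(2^{-j}y)$. Then
\[
g(x)=\int \theta(z)\,g_j(2^jx-z)\,dz .
\]
By translation invariance (property (3)), it suffices to bound $|\int\theta(z)h(-z)dz|\lesssim \|h\|_X$ for $h\in X$, i.e. to show that pairing against a fixed Schwartz function is a bounded functional on $X$. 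This follows from $X\hookrightarrow\mathcal{S}'$ (property (1)): the embedding is continuous, so $h\mapsto\langle h,\theta(-\cdot)\rangle$ is a continuous linear functional on $X$, and hence $|\langle h,\theta(-\cdot)\rangle|\le C\|h\|_X$. This gives $\|g\|_\infty\lesssim \|g_j\|_X$.

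Finally, apply the Kato condition with $\lambda=2^{-j}\in(0,1]$ for $j\geq0$. It gives $2^{-j}\|g(2^{-j}\cdot)\|_X\lesssim\|g\|_X$, i.e. $\|g_j\|_X\lesssim 2^{j}\|g\|_X$, hence $\|\Delta_j f\|_\infty\lesssim 2^{j}\|\Delta_j f\|_X$. For $j=-1$ no rescaling is needed, since $\|\Delta_{-1}f\|_\infty\lesssim\|\Delta_{-1}f\|_X$ follows directly from the functional bound. The main subtle point is the pairing step: continuity of $X\hookrightarrow\mathcal{S}'$ must yield a bound against a single Schwartz function. This holds because the inclusion $X\hookrightarrow\mathcal{S}'$ is continuous (weak-$*$ sense suffices, by the uniform boundedness principle applied to the family of functionals $h\mapsto\langle h,\phi\rangle$). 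The local integrability in property (2) is what lets us interpret the pairing as the integral $\int\theta(z)h(-z)\,dz$.
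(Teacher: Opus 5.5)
Your proof is correct and follows essentially the paper's argument: in both, the value at a point of a frequency-localized piece is written as the pairing of a translated, $2^{-j}$-dilated copy with a fixed Schwartz function, and this pairing is bounded using the continuity of $X\hookrightarrow\mathcal{S}'$, translation invariance, and the Kato dilation condition. The differences are minor. The paper first proves $\|v\|_{B^{-1,\infty}_\infty}\lesssim\|v\|_X$ with the kernel of $\Delta_j$ and then applies it to each block $\Delta_j w$, whereas you prove the blockwise bound via a reproducing kernel and recover the first assertion from $X\hookrightarrow B^{0,\infty}_X$; pairing against $\theta(-\cdot)$ also means you only use $\lambda=2^{-j}>0$ in the Kato condition, whereas the paper implicitly uses a negative dilation.
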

\begin{proof}
Let $X$ be a Kato space and $v\in X.$ Let $x\in \mathbb{R}^{n}.$ For any
non negative integer $j,~\Delta _{j}v(x)=\left\langle v(x-2^{-j}.);%
\mathcal{F}^{-1}(\psi )\right\rangle _{S^{\prime }(\mathbb{R}^{n})\times
S(\mathbb{R}^{n})}.$ Hence, there exists a positive constant $C$ that
depends only on $\varphi ,$ such that
\begin{eqnarray*}
\left\vert \Delta _{j}v(x)\right\vert &\leq &C\left\Vert
v(x-2^{-j}.)\right\Vert _{X} \\
& \lesssim& 2^{j}\left\Vert v(x+.)\right\Vert _{X} \\
&=&2^{j}\left\Vert v\right\Vert _{X}.
\end{eqnarray*}%
Similarly, we prove that $\left\vert S_{0}v(x)\right\vert \lesssim
\left\Vert v\right\Vert _{X}.$ This implies
\begin{equation}\label{inj}
\left\Vert
v\right\Vert _{B_{\infty }^{-1,\infty }}\lesssim \left\Vert
v\right\Vert _{X}.
\end{equation}
Now let $w\in B^{0,\infty}_X$. For any integer $j\geq -1$, $2^{-j}\left\Vert\Delta_j w\right\Vert_\infty\simeq \left\Vert\Delta_j w\right\Vert_{B_{\infty }^{-1,\infty }}$. Hence, according to the previous estimate (\ref{inj}), we have
$$2^{-j}\left\Vert\Delta_j w\right\Vert_\infty\lesssim \left\Vert
v_{0}\right\Vert _{X},$$
which implies the required embedding $B^{0,\infty}_X\hookrightarrow B_{\infty }^{-1,\infty }(\mathbb{R}^{n}).$
\end{proof}
The following proposition collects several results concerning the existence, the uniqueness, and the regularity of a class of solutions to the Navier-Stokes equations, as well as a powerful criteria governing the possibility of finite-time blow-up.
\begin{proposition}\label{Ram}
Let $X$ be a Kato space and let $u_0\in\tilde{X}$. there exists a unique maximal solution $u\in C([0,T^\ast),\tilde{X})\cap C_{0,\frac{1}{2}}([0,T^\ast),L^\infty(\mathbb{R}^n))$ to the Navier Stokes equations with initial data $u_0$ where $0<T^\ast\leq \infty$ is the maximal time of existence of the solution $u$. Moreover the following proprieties are satisfied:
\begin{enumerate}
    \item For every $s>0$, $u\in C^\infty((0,T^\ast),B^{s,\infty}_X\cap B^{s,\infty}_\infty).$
    \item It $T^\ast< \infty$ then $u(t)$ does not converge in $B^{-1,\infty}_\infty(\mathbb{R}^n)$ as $t\to T^{\ast -}$.
    \item For every $\alpha>0, u\in C_{0,\frac{\alpha}{2}}([0,T^\ast),B^{\alpha,\infty}_X)$.
\end{enumerate}
Such solution $u$ will be called The $X-$Kato maximal solution to the Navier-Stokes equations associated to the initial data $u_0$.
\end{proposition}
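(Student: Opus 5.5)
Local existence will come from Kato's fixed point scheme in the time-weighted space
\[
E_T=C([0,T],\tilde X)\cap C_{0,\frac12}([0,T],L^\infty),
\qquad
\|u\|_{E_T}=\sup_{t\le T}\|u(t)\|_X+\sup_{0<t\le T}\sqrt t\,\|u(t)\|_\infty .
\]
Uniqueness will follow from continuity in $\tilde X$ plus a smallness argument. Regularity will be obtained by bootstrapping in $B^{s,\infty}_X$ and $B^{s,\infty}_\infty$ using Lemma~\ref{reg} and Corollary~\ref{banach}. The blow-up alternative (2) will be proved last by contradiction.

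\textbf{Local existence and uniqueness.} Lemma~\ref{Linj} gives $u_0\in B^{-1,\infty}_\infty$. Since $u_0\in\tilde X$, approximating it by Schwartz functions and using Lemma~\ref{reg}(1) (with $s=1$, $X=L^\infty$) yields $\sqrt t\|e^{t\Delta}u_0\|_\infty\to0$ as $t\to0^+$. The nonlinear term is controlled by the bound (\ref{kernel}) on the Oseen kernel:
\[
\|B(u,v)(t)\|_X\lesssim\int_0^t\frac{\|u\|_\infty\|v\|_X}{\sqrt{t-s}}\,ds\lesssim \sup_s\sqrt s\|u(s)\|_\infty\ \sup_s\|v(s)\|_X ,
\]
using $\int_0^t (t-s)^{-1/2}s^{-1/2}\,ds=\pi$. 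In the same way,
\[
\sqrt t\,\|B(u,v)(t)\|_\infty\lesssim \sqrt t\int_0^t (t-s)^{-1/2}s^{-1}\,ds\ \sup\sqrt s\|u\|_\infty\ \sup\sqrt s\|v\|_\infty ,
\]
and the integral on the right equals $C/\sqrt t$, so this term is bounded as well. Each estimate of $B(u,v)$ therefore contains at least one factor $\sup_{s\le T}\sqrt s\|\cdot\|_\infty$, which is small for small $T$. Banach's fixed point theorem on a ball where the $L^\infty$-weighted part is small then gives existence. Continuity of $B(u,u)$ into $\tilde X$ follows by approximating $u$ with functions in $\tilde X$. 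Uniqueness in the full class is proved by writing $w=u-v$ and noting that $\|w\|_{L^\infty_tX}$ is bounded by $C\bigl(\sup\sqrt s\|u\|_\infty+\sup\sqrt s\|v\|_\infty\bigr)\|w\|_{L^\infty_tX}$, hence $w=0$ on a short interval. Both $u$ and $v$ lie in $C_{0,\frac12}(L^\infty)$, so the prefactor is small near each restart time; a continuation argument then extends uniqueness to the whole interval. A standard argument produces the maximal time $T^\ast$.

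\textbf{Regularity (1) and (3).} I will show that $t^{\alpha/2}\|u(t)\|_{B^{\alpha,\infty}_X}\to0$ for every $\alpha>0$, first for $0<\alpha<1$ and then inductively for larger $\alpha$. Write
\[
u(t)=e^{(t-t/2)\Delta}u(t/2)-\int_{t/2}^t\mathbb P\nabla e^{(t-s)\Delta}(uu)\,ds-\int_0^{t/2}\cdots .
\]
For the first piece, Lemma~\ref{reg}(2) gives the factor $t^{-\alpha/2}$. For the pieces of $B(u,u)$, use Corollary~\ref{banach} to bound $\|uu\|_{B^{\alpha,\infty}_X}\lesssim\|u\|_\infty\|u\|_{B^{\alpha,\infty}_X}$, and Lemma~\ref{Fou} together with the semigroup smoothing. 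The singular time integral $\int_0^t (t-s)^{-1/2-\epsilon}s^{-1/2-\alpha/2}\,ds$ is finite precisely when $\alpha<1$, so I will bootstrap upward in $\alpha$ in steps of length less than $1$. The smallness of $\sup\sqrt s\|u\|_\infty$ closes each step near $t=0$, and on compact subsets of $(0,T^\ast)$ the same argument gives boundedness. The identical argument with $X$ replaced by $L^\infty$ (an admissible space) gives $B^{s,\infty}_\infty$ regularity. Time regularity comes from the equation $u_t=\Delta u-\mathbb P\nabla(uu)$ and induction, which yields $C^\infty$ in time with values in these spaces. This proves (1) and (3).

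\textbf{Blow-up criterion (2), the main obstacle.} Suppose $T^\ast<\infty$ and $u(t)\to g$ in $B^{-1,\infty}_\infty$ as $t\to T^{\ast-}$. The difficulty is that local existence does not hold in $B^{-1,\infty}_\infty$ itself. My plan is to use Lemma~\ref{interp}(2) to interpolate $\|u\|_\infty$ between the $B^{-1,\infty}_\infty$ norm and a positive-regularity norm $B^{s,\infty}_\infty$, and to exploit the fact that $u(t)$ converges in $B^{-1,\infty}_\infty$, so that its high-frequency part is uniformly small near $T^\ast$. Concretely, split $u(t)=S_Nu(t)+(I-S_N)u(t)$. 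With $N$ fixed, the low-frequency part $S_Nu$ is bounded in $L^\infty$ uniformly in $t$, and its $X$-norm is also controlled. Choosing $t_0$ close to $T^\ast$ and restarting the equation from $u(t_0)$, I would show that $\sqrt\tau\,\|e^{\tau\Delta}u(t_0)\|_\infty$ stays below the fixed-point smallness threshold for all $t_0$ near $T^\ast$, uniformly for $\tau$ up to some $\delta>0$ independent of $t_0$. This uses Lemma~\ref{reg}(1) for the part that is small in $B^{-1,\infty}_\infty$ and the trivial bound $\sqrt\tau\,C_N$ for the low frequencies. Once the existence time is shown to be bounded below uniformly in $t_0$, the solution extends beyond $T^\ast$, which is a contradiction. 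I expect this uniform lower bound on the existence time, derived only from convergence in $B^{-1,\infty}_\infty$, to be the hardest step to make rigorous.
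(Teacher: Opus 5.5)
Your $L^\infty$ bound in the Kato scheme is false, and your plan for (2) depends on it. The integral $\int_0^t (t-s)^{-1/2}s^{-1}\,ds$ diverges at $s=0$. In fact no estimate of $\sup_t\sqrt t\,\|B(u,v)(t)\|_\infty$ in terms of the weighted $L^\infty$ norms of $u,v$ alone can hold: take $u=v=s^{-1/2}\phi$ with $\phi$ Schwartz. The correct bound splits at $t/2$. On $[0,t/2]$ it uses $\|\mathbb P\nabla e^{(t-s)\Delta}f\|_\infty\lesssim (t-s)^{-1}\|f\|_X$ (Lemma~\ref{Linj} plus heat smoothing) together with $\|uv\|_X\lesssim\|u\|_\infty\|v\|_X$, which gives
\[
\sup_{t\le T}\sqrt t\,\|B(u,v)(t)\|_\infty\lesssim \sup_{s\le T}\sqrt s\,\|u(s)\|_\infty\Bigl(\sup_{s\le T}\sqrt s\,\|v(s)\|_\infty+\sup_{s\le T}\|v(s)\|_X\Bigr).
\]
Local existence still follows: write $u=e^{t\Delta}u_0+w$ and solve for a small $w$. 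But the smallness now required of $\sup_{\tau\le T}\sqrt\tau\,\|e^{\tau\Delta}u_0\|_\infty$ depends on $\|u_0\|_X$; in this scheme it is of order $(1+\|u_0\|_X)^{-1}$.

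This is where your argument for (2) fails. Convergence of $u(t)$ in $B^{-1,\infty}_\infty$ does make $\sqrt\tau\,\|e^{\tau\Delta}u(t_0)\|_\infty$ small for $\tau\le\delta$, uniformly in $t_0$ near $T^\ast$. However, the threshold this must beat depends on $\|u(t_0)\|_X$, and nothing controls that quantity as $t_0\to T^{\ast-}$. Your assertion that the $X$-norm of $S_Nu(t_0)$ is controlled only yields $\|S_Nu(t_0)\|_X\lesssim\|u(t_0)\|_X$; for $X=L^3$, say, no $B^{-1,\infty}_\infty$ information bounds it. So the free evolution at the restart time cannot give a uniform existence time.

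The missing idea is to use the convergence along the whole trajectory inside the nonlinearity, i.e.\ to prove an a priori bound on the solution itself. This is the mechanism of the proof of Theorem~\ref{lem}, moved from $t\to0^+$ to $t\to T^{\ast-}$; the paper itself only cites earlier work for (2).
\begin{itemize}
\item The set $\{u(t):T^\ast-\eta\le t<T^\ast\}$, together with its limit, is compact in $B^{-1,\infty}_\infty$ and lies in the closure of $L^\infty$. Hence for each $\varepsilon>0$ one can write $u(t)=u_\varepsilon(t)+u_\infty(t)$ (e.g.\ $u_\infty=S_Nu$ with $N$ large), with $\|u_\varepsilon(t)\|_{B^{-1,\infty}_\infty}\le\varepsilon$ and $\|u_\infty(t)\|_\infty\le N_\infty$ for all such $t$.
\item Inserting this into the paraproduct bounds of Lemma~\ref{para} gives the analogue of (\ref{bas}) for $\Theta(\sigma)=\sup_{s\le\sigma}s^{\frac{\gamma+1}{2}}\|u(t_0+s)\|_{B^{\gamma,\infty}_\infty}$, $\gamma>1$, with $\delta$ independent of $t_0$.
\item Hence $u$ is bounded in $B^{\gamma,\infty}_\infty$, and by Lemma~\ref{interp} in $L^\infty$, on a left neighbourhood of $T^\ast$.
\item The $L^\infty$ local theory (existence time $\gtrsim\|u(t_0)\|_\infty^{-2}$, with the $X$-norm propagated linearly) then continues $u$ past $T^\ast$, which is the contradiction.
\end{itemize}

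A smaller point on (3). Once you split at $t/2$, the integral over $[t/2,t]$ stays away from $s=0$, so the restriction $\alpha<1$ you cite never arises; the paper treats all $\alpha>0$ at once. A stepwise bootstrap is still a legitimate way to avoid assuming a priori that $\sup_s s^{\alpha/2}\|u(s)\|_{B^{\alpha,\infty}_X}$ is finite. However, your displayed formula counts $\int_0^{t/2}$ twice. Also, to get $o(t^{-\alpha/2})$ rather than $O(t^{-\alpha/2})$, the linear term must be $e^{t\Delta}u_0$ with $u_0\in\tilde X$, or you must use $u(t/2)\to u_0$ in $X$.
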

\begin{proof}
   The proofs of the existence and uniqueness statements, together with that of the first assertion, can be found in \cite{May} (see also \cite{Lem2}). The proof of the second assertion follows closely the argument used in the proof of the main theorem in \cite{May2}, where the result is established for $X=L^3(\mathbb{R}^3)$. An English version of that paper can be found in \cite{May3}.  It remains to establish the last assertion, which, to the best of our knowledge, is new and has not previously been established in the literature. In view of the preceding first assertion, we only need to show that for any $\alpha>0$,
    \begin{equation}\label{creg}
        \lim_{t\to 0^+}t^{\frac{\alpha}{2}}\left\Vert u(t)\right\Vert_{B^{\alpha,\infty}_X}=0.
    \end{equation}
Let $\alpha>0$. For every $t\in (0,T^\ast),$
\[
u(t)=e^{t\Delta}u_0 -\int_{0}^{t/2}\mathbb{P}\nabla e^{(t-s)\Delta }(uu)(s)ds-\int_{t/2}^{t}\mathbb{P}\nabla e^{(t-s)\Delta }(uu)(s)ds:=
e^{t\Delta}u_0 -u_l (t)-u_h (t).
\]
Using the second assertion of Lemma \ref{reg}, Lemma \ref{Fou}, and the embedding  $X \hookrightarrow B^{0,\infty}_X$,  we obtain
\begin{eqnarray*}
    \left\Vert u_l(t)\right\Vert_{B^{\alpha,\infty}_X}&\lesssim &\int_0^{t/2} \frac{1}{(t-s)^{\frac{\alpha+1}{2}}} \left\Vert \mathbb{P}\nabla(uu)(s)\right\Vert_{B^{-1,\infty}_X}ds\\
    &\lesssim& \int_0^{t/2} \frac{1}{(t-s)^{\frac{\alpha+1}{2}}} \left\Vert (uu)(s)\right\Vert_Xds\\
    &\lesssim& t^{-\frac{\alpha}{2}} \sup_{0\leq s\leq t}\sqrt{s}\left\Vert u(s)\right\Vert_\infty \sup_{0\leq s\leq t}\left\Vert u(s)\right\Vert_X.
\end{eqnarray*}
On the other hand, using the estimate (\ref{kernel}) and Corollary \ref{creg}, we get
\begin{eqnarray*}
    \left\Vert u_h(t)\right\Vert_{B^{\alpha,\infty}_X}&\lesssim &\int_{t/2}^t \frac{1}{\sqrt{t-s}}\left\Vert (uu)(s)\right\Vert_{B^{\alpha,\infty}_X}ds\\
    &\lesssim& \int_{t/2} ^t \frac{1}{\sqrt{t-s}} \left\Vert u(s)\right\Vert_\infty \left\Vert u(s)\right\Vert_{B^{\alpha,\infty}_X}ds\\
    &\lesssim &t^{-\frac{\alpha}{2}} \sup_{0\leq s\leq t}\sqrt{s}\left\Vert u(s)\right\Vert_\infty \sup_{0\leq s\leq t}s^\frac{\alpha}{2}\left\Vert u(s)\right\Vert_{B^{\alpha,\infty}_X}.
\end{eqnarray*}
Therefore, combining the preceding estimates on $u_l$ and $u_h$, we infer the existence of an absolute constant $C>0$ such that
\begin{eqnarray*}
\sup_{0\leq s\leq t}s^\frac{\alpha}{2}\left\Vert u(s)\right\Vert_{B^{\alpha,\infty}_X}&\leq &\sup_{0\leq s\leq t}s^\frac{\alpha}{2}\left\Vert e^{s\Delta}u_0\right\Vert_{B^{\alpha,\infty}_X}+ C\sup_{0\leq s\leq t}\sqrt{s}\left\Vert u(s)\right\Vert_\infty \sup_{0\leq s\leq t}\left\Vert u(s)\right\Vert_X\\
& & + C \sup_{0\leq s\leq t}\sqrt{s}\left\Vert u(s)\right\Vert_\infty \sup_{0\leq s\leq t}s^\frac{\alpha}{2}\left\Vert u(s)\right\Vert_{B^{\alpha,\infty}_X}.
\end{eqnarray*}
Since $\sqrt{t}\left\Vert u(t)\right\Vert_\infty\to0$ as $t\to 0^+$, $ C \sup_{0\leq s\leq t}\sqrt{s}\left\Vert u(s)\right\Vert_\infty \leq \frac{1}{2}$ for $t$ small enough. Therefore,
\[\sup_{0\leq s\leq t}s^\frac{\alpha}{2}\left\Vert u(s)\right\Vert_{B^{\alpha,\infty}_X}\leq 2 \sup_{0\leq s\leq t}s^\frac{\alpha}{2}\left\Vert e^{s\Delta}u_0\right\Vert_{B^{\alpha,\infty}_X}+ 2C \sup_{0\leq s\leq t}\sqrt{s}\left\Vert u(s)\right\Vert_\infty \sup_{0\leq s\leq t}\left\Vert u(s)\right\Vert_X.
\]
Hence, using the first assertion of Lemma \ref{reg}, we obtain the claimed result (\ref{creg}).
\end{proof}
\section{Proof of the Main Theorem}
The proof of Theorem \ref{main} is greatly inspired by the original paper \cite{Lem}. It is based on some intermediate results that are of independent interest. We begin with a weak-strong uniqueness result.
\begin{theorem}\label{WS}
Let $v_0\in\tilde{L}^{3,\infty}(\mathbb{R}^3)$, $v\in C([0,T^\ast),\tilde{L}^{3,\infty}(\mathbb{R}^3))\cap C_{0,\frac{1}{2}}([0,T^\ast),L^\infty(\mathbb{R}^3))$ be the  $ {L}^{3,\infty}(\mathbb{R}^3)$-Kato maximal solution to the Navier Stokes equations with initial data $v_0$, and $u\in L^p([0,T_1],\tilde{L}^{3,\infty}(\mathbb{R}^3))\cap C([0,T_1],B^{-1,\infty}_\infty(\mathbb{R}^3))$, with $p>2$ and $0<T_1<\infty$, a solution to the Navier Stokes equations associated to the same initial data $v_0$. Then $T^\ast>T_1$ and $u(t)=v(t)$ on $[0,T_1]$.
\end{theorem}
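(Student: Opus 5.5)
Set $w=u-v$ on $[0,\min(T_1,T^\ast))$. Since both $u$ and $v$ solve (\ref{eq2}) with the same $u_0=v_0$, and $B$ is symmetric, we have
\[
w=-B(w,w)-2B(v,w).
\]
The plan has three steps: show $w\equiv 0$ on a short interval $[0,\delta]$; continue this by a continuation argument; and use the blow-up criterion of Proposition \ref{Ram} to get $T^\ast>T_1$. All estimates will be carried out in $L^p_t L^{3,\infty}_x$ on $[0,\delta]$, using $\sup_{(0,\delta]}\sqrt{t}\|v(t)\|_\infty\to 0$ together with smallness of $\|w\|_{L^p([0,\delta],L^{3,\infty})}$, which comes from absolute continuity of the integral.

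First, the linear term. For the dispersive term $B(v,w)$, bound $\|vw(s)\|_{L^{3,\infty}}\le \|v(s)\|_\infty\|w(s)\|_{L^{3,\infty}}$ and use the Oseen kernel bound (\ref{kernel}) with the convolution estimate in the admissible space $L^{3,\infty}$. This gives
\[
\|B(v,w)(t)\|_{L^{3,\infty}}\lesssim \int_0^t \frac{1}{\sqrt{t-s}\sqrt{s}}\,\bigl(\sqrt{s}\|v(s)\|_\infty\bigr)\|w(s)\|_{L^{3,\infty}}\,ds .
\]
Lemma \ref{mey} (this is where $p>2$ enters) then yields
\[
\|B(v,w)\|_{L^p L^{3,\infty}}\lesssim \sup_{s\le\delta}\sqrt{s}\|v(s)\|_\infty\,\|w\|_{L^pL^{3,\infty}},
\]
which is small times $\|w\|$.

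Second, the quadratic term $B(w,w)$, which is the main obstacle: $L^pL^{3,\infty}$ alone does not control $ww$ in a space where $B$ gains enough. Following \cite{Lem}, I would bring in $w\in C([0,T_1],B^{-1,\infty}_\infty)$ with $w(0)=0$, so $\sup_{t\le\delta}\|w(t)\|_{B^{-1,\infty}_\infty}\to0$, and split $ww$ with the modified paraproducts, $ww=\pi^a(w,w)+\pi^b(w,w)$. The $\pi^a$ part is handled by Lemma \ref{para}(4): $B^{-1,\infty}_\infty\times B^{0,\infty}_{L^{3,\infty}}\to B^{-1,\infty}_{L^{3,\infty}}$. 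The $\pi^b$ part is handled by Lemma \ref{para}(3) with $s_1=-1$ and $s_2=1+\epsilon$, which needs a little positive regularity of $w$ in $L^{3,\infty}$. To obtain it, I would first estimate $w$ in $L^p B^{\epsilon,\infty}_{L^{3,\infty}}$ (or $L^{p}B^{\epsilon-1+\cdots}$), using the parabolic gain of Lemma \ref{reg}(3) and Lemma \ref{Fou}, and then interpolate via Lemma \ref{interp}. One should obtain a bound of the form
\[
\|B(w,w)\|_{L^pL^{3,\infty}}\lesssim \sup_{t\le\delta}\|w(t)\|_{B^{-1,\infty}_\infty}\,\|w\|_{L^pL^{3,\infty}},
\]
possibly with an auxiliary norm added on both sides. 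If the Besov route in $L^{3,\infty}$ is too lossy, I would try the alternative of working in $L^q$-based spaces: use Corollary \ref{Mon} to absorb the $L^{3,\infty}$ factor, Lemma \ref{GMO} to interpolate between $W^{1,p}$ and $B^{-1,\infty}_\infty$, and maximal regularity (Proposition \ref{max}) for the time integration. The approximation of $\tilde L^{3,\infty}$ by Schwartz functions would be used to split $w=w_1+w_2$ with $w_2$ nice and $w_1$ small in $L^{3,\infty}$. Combining with the first step gives $\|w\|\le \tfrac12\|w\|$ on $[0,\delta]$, hence $w=0$ there.

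Finally, the continuation. Let $t_0=\sup\{t\le \min(T_1,T^\ast): u=v \text{ on } [0,t]\}$. If $t_0<T^\ast$ and $t_0<T_1$, then $v(t_0)\in\tilde L^{3,\infty}$, so restarting the argument from time $t_0$ (time-translated, with $v$ bounded near $t_0$, which is even easier) contradicts maximality of $t_0$. If instead $T^\ast\le T_1$, then $v=u$ on $[0,T^\ast)$ and $u\in C([0,T_1],B^{-1,\infty}_\infty)$, so $v(t)$ converges in $B^{-1,\infty}_\infty$ as $t\to T^{\ast-}$; this contradicts Proposition \ref{Ram}(2). Hence $T^\ast>T_1$ and $u=v$ on $[0,T_1]$.
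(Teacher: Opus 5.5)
Your bound on $B(v,w)$ via $\sqrt{s}\|v(s)\|_\infty\to0$ and Lemma \ref{mey} is correct. It is simpler than the paper's splitting $v=v_\varepsilon+v_\infty$ and works verbatim in $L^pL^q$. Your continuation and blow-up steps match the paper.

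The gap is the quadratic term $B(w,w)$, which is the whole difficulty. Smallness of $\|w\|_{L^p([0,\delta],L^{3,\infty})}$ does not help: for $p<\infty$ this norm is supercritical, so no bound $\|B(w,w)\|\lesssim\|w\|^2$ is available, and the small factor must be a critical quantity.

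Your paraproduct route does not close. To treat $\pi^b(w,w)$ by Lemma \ref{para}(3) with $s_1=-1$, you need $w\in B^{s_2,\infty}_{L^{3,\infty}}$ with $s_2>1$. That is not ``a little'' regularity, and nothing in the hypotheses on $u$ provides it:
\begin{itemize}
\item the Duhamel term gains one derivative over $ww\in L^{p/2}L^{3/2,\infty}$, which brings you back only to the scaling level of $L^{3,\infty}$;
\item an $L^pB^{\epsilon,\infty}_{L^{3,\infty}}$ bound cannot be interpolated up to $s_2>1$;
\item higher regularity of $u$ is exactly what this theorem is afterwards used to prove, so assuming it is circular.
\end{itemize}
The $\pi^a$ part also only lands in $B^{0,\infty}_{L^{3,\infty}}$ rather than $L^{3,\infty}$, and the heat estimates you cite are $L^\infty$ in time. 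So the bound $\|B(w,w)\|_{L^pL^{3,\infty}}\lesssim\sup_t\|w(t)\|_{B^{-1,\infty}_\infty}\|w\|_{L^pL^{3,\infty}}$ is unproved. Since the high--high interaction of a $B^{-1,\infty}_\infty$ factor with a mere $L^{3,\infty}$ factor is uncontrolled, there is no reason to expect it.

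Your fallback lists the right tools but misses the mechanism, which has three parts.

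(i) The norm is $L^p([0,\tau],L^q)$ with $2<q<3$. One must first check that this norm of $w$ is \emph{finite}, since absorption requires it. Taking $2<p<4$, Corollary \ref{cor2} gives $B(u,u),B(v,v)\in L^{\rho}L^{3/2,\infty}$ with $\rho=\frac{2p}{4-p}\geq p$, and $L^{3/2,\infty}\cap L^{3,\infty}\subset L^q$.

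(ii) Proposition \ref{max} in $L^{q/2}$ gives $\|B(w,w)\|_{L^{p/2}W^{1,q/2}}\lesssim\|w\|^2_{L^pL^q}$. Lemma \ref{GMO} is then applied to $B(w,w)(t)$ itself, yielding $\|B(w,w)\|_{L^pL^q}\lesssim\big(\sup_{s\le\tau}\|B(w,w)(s)\|_{B^{-1,\infty}_\infty}\big)^{1/2}\|w\|_{L^pL^q}$. The small factor is the $B^{-1,\infty}_\infty$ norm of $B(w,w)$, not of $w$.

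(iii) Since $B(w,w)=-w-2B(v,w)$ and $w(t)\to0$ in $B^{-1,\infty}_\infty$, one must prove that $B(v,w)(t)\to0$ in $B^{-1,\infty}_\infty$. This does not follow from the $L^pL^{3,\infty}$ information on $w$, and it is where paraproducts actually enter:
\begin{itemize}
\item $\pi^a(v,w)$ is handled by Lemma \ref{para}(4);
\item $\pi^b(w,v)$ is handled by Lemma \ref{para}(3) with $s_1=-1$ and $s_2=\alpha\in(1,2)$. The regularity above $1$ that you tried to extract from $w$ is placed on the Kato solution $v$, for which Proposition \ref{Ram}(3) gives $\sup_s s^{\alpha/2}\|v(s)\|_{B^{\alpha,\infty}_{L^{3,\infty}}}<\infty$;
\item Lemma \ref{Linj} then brings the estimate back to $B^{-1,\infty}_\infty$.
\end{itemize}

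Finally, your proposed splitting of $w$ into a part small in $L^{3,\infty}$ plus a nice part is not available, because $w$ is only $L^p$ in time. It is also not where smallness is needed.
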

\begin{proof}
Let $0<T<\min{\{T_1,T^\ast\}}.$ First, we may assume, without loss of generality, that
$p\in(2,4)$. Set $w_{1}=B(u,u),w_{2}=B(v,v)$, and $w=w_{1}-w_{2}.$ We shall
prove that $w=0$ on $[0,T]$ which implies $u=v$ on $[0.T]$ since $w$ is also equal to   $u-v$. The proof is divided  into several steps:
\par\noindent\textbf{Step 1:} We shall prove that there exists $q>2$
such that $w_{1},w_{2}\in L^{p}([0,T],L^{q}(\mathbb {R}^{3})).$
\par\noindent Using the estimation (\ref{kernel}) on the Oseen's kernel, the product estimate in Lorentz spaces (Proposition \ref{lorentz}), and the fact that Lorentz spaces are admissible spaces,  we obtain
\[
\left\Vert w_{1}(t)\right\Vert _{L^{\frac{3}{2},\infty }(\mathbb{R}%
^{3})}\lesssim \int_{0}^{t}\frac{1}{\sqrt{t-s}}\left\Vert u(s)\right\Vert
_{L^{3,\infty }(\mathbb{R}^{3})}^{2}ds.
\]%
Therefore, in view of  Corollary \ref{cor2}, we have
\[
\left\Vert w_{1}\right\Vert _{L^{\rho }([0,T],L^{\frac{3}{2},\infty }(%
\mathbb{R}^{3}))}\lesssim \left\Vert u\right\Vert _{L^{p}([0,T],L^{3,\infty
}(\mathbb{R}^{3}))}^{2}
\]%
where $\rho =\frac{2p}{4-p}.$ This implies that $w_1\in L^{p }([0,T],L^{\frac{3}{2},\infty }( \mathbb{R}^{3}))$ since $p\leq \rho$. On the other hand, since $%
e^{t\Delta }v_{0}\in C([0,T],\tilde{L}^{3,\infty }(\mathbb{R}^{3})),$ $%
w_{1}=u-e^{t\Delta }v_{0}\in L^{p}([0,T],\tilde{L}^{3,\infty }(\mathbb{R}%
^{3}))$, then using the continuous embedding of the space  $L^{3,\infty }(\mathbb{R}^{3})\cap L^{%
\frac{3}{2},\infty }(\mathbb{R}^{3})$ into $
L^{q}(\mathbb{R}^{3})$ for any $q\in (\frac{3}{2},3),$  we conclude that $%
w_{1}\in L^{p}([0,T],L^{q}(\mathbb{R}^{3}))$ for any real number $q\in (\frac{3}{2},3).$ The same result
holds true for $w_{2}$, since $v\in C([0,T],\tilde{L}^{3,\infty }(\mathbb{R}%
^{3})\subset L^{p}([0,T],\tilde{L}^{3,\infty }(\mathbb{R}^{3})).$

\par\noindent\textbf{Step 2:} We shall  show that
\begin{equation} \label{claim1}
\lim_{t\to 0^+} B(w,w)(t)=0  \text{ in } B_{\infty }^{-1,\infty }.
\end{equation}
Since
\begin{equation}\label{aicha}
w=u-v\to 0 \text{ in } B_{\infty }^{-1,\infty } \text{ as } t\to 0^+,
\end{equation}
and $w=B(w,w)+2B(v,w)$, it suffices to prove that
\begin{equation}\label{claim}
B(v,w)(t)\to 0  \text{ in } B_{\infty }^{-1,\infty } \text{ as } t\to 0^+   .
\end{equation}
To do this, we use  the Bony-paraproduct operators to split $B(v,w)(t)$ into two parts:
\begin{eqnarray*}
B(v,w)(t) &=&\int_{0}^{t}\mathbb{P}\nabla e^{(t-s)\Delta }\pi
^{a}(v,w)(s)ds+\int_{0}^{t}\mathbb{P}\nabla e^{(t-s)\Delta }\pi ^{b}(w,v)(s)
\\
&=&W_a(t)+W_{b}(t).
\end{eqnarray*}%
Using now the fourth assertion of Lemma \ref{para} and the last assertion of Lemma \ref{reg}, we get
\[
\left\Vert W_a(t)\right\Vert _{B_{\infty }^{-1,\infty }}\lesssim
\sup_{0\leq s\leq t}\left\Vert v(s)\right\Vert _{B_{\infty }^{-1,\infty
}}\sup_{0\leq s\leq t}\left\Vert w(s)\right\Vert _{B_{\infty }^{-1,\infty
}},
\]%
which, combined with (\ref{aicha}),  implies that
\begin{equation}\label{claim2}
\left\Vert W_a(t)\right\Vert _{B_{\infty }^{-1,\infty }}\rightarrow 0\text{
as }t\rightarrow 0.
\end{equation}
Let $1<\alpha <2$ be a fixed real number. Using  Lemma \ref{reg}, Lemma \ref{para}, and Lemma \ref{Fou}, we obtain
\begin{eqnarray*}
\left\Vert W_{b}(t)\right\Vert _{B_{L^{3,\infty }}^{0,\infty }} &\lesssim
&\int_{0}^{t}\frac{1}{(t-s)^{1-\frac{\alpha }{2}}}\left\Vert \mathbb{P}%
\nabla \pi ^{b}(w,v)(s)\right\Vert _{B_{L^{3,\infty }}^{\alpha -2,\infty }}ds
\\
&\lesssim &\int_{0}^{t}\frac{1}{(t-s)^{1-\frac{\alpha }{2}}}\left\Vert \pi ^{b}(w,v)(s)\right\Vert _{B_{L^{3,\infty }}^{\alpha -1,\infty }}ds \\
&\lesssim &\int_{0}^{t}\frac{1}{(t-s)^{1-\frac{\alpha }{2}}}\left\Vert
w(s)\right\Vert _{B_{\infty }^{-1,\infty }}\left\Vert
v(s)\right\Vert _{B_{L^{3,\infty }}^{\alpha ,\infty }}ds \\
&\lesssim &\int_{0}^{t}\frac{1}{(t-s)^{1-\frac{\alpha }{2}}}\frac{1}{s^{%
\frac{\alpha }{2}}}ds\sup_{0\leq s\leq t}\left\Vert
w(s)\right\Vert _{B_{\infty }^{-1,\infty }} \sup_{0\leq s\leq t}s^{\frac{\alpha }{2}}\left\Vert v(s)\right\Vert
_{B_{L^{3,\infty }}^{\alpha ,\infty }}\\
&=&C(\alpha ) \sup_{0\leq s\leq t}\left\Vert w(s)\right\Vert
_{B_{\infty }^{-1,\infty }}\sup_{0\leq s\leq t}s^{\frac{\alpha }{2}}\left\Vert v(s)\right\Vert _{B_{L^{3,\infty
}}^{\alpha ,\infty }}.
\end{eqnarray*}%
Combining the last inequality with the continuous embedding $B_{L^{3,\infty }}^{0,\infty }\hookrightarrow
B_{\infty }^{-1,\infty }$ (see Lemma \ref{Linj}),  we infer that $\left\Vert W_a(t)\right\Vert_{B_{\infty }^{-1,\infty }}\to 0$ as $t\to o^+$ . This combined with (\ref{claim2}) finishes the proof of the required claim (\ref{claim}).
\par\noindent\textbf{Step 3:}
Let $\tau \in (0,T]$, First, by the estimate (\ref{kernel}),
\[
\left\Vert B(w,w)(t)\right\Vert_{
L^{\frac{q}{2}}(\mathbb{R}^{3})}\lesssim \int_{0}^{t}\frac{1}{%
\sqrt{t-s}}\left\Vert w(s)\right\Vert_{L^{q}(\mathbb{R}^{3})}^2ds,
\]%
which implies, thanks to Corollary \ref{cor2},
\[
\left\Vert B(w,w)\right\Vert _{L^\rho([0,\tau ],L^{\frac{q}{2}}(%
\mathbb{R}^{3}))}\lesssim \left\Vert w\right\Vert
_{L^{p}([0,\tau ],L^{q}(\mathbb{R}^{3}))}^{2},
\]%
where $\rho =\frac{2p}{4-p}.$ Since $p\leq\rho$,
the previous estimate gives
\begin{equation}\label{he1}
\left\Vert B(w,w)\right\Vert _{L^p([0,\tau ],L^{\frac{q}{2}}(%
\mathbb{R}^{3}))}\lesssim \left\Vert w\right\Vert
_{L^{p}([0,\tau ],L^{q}(\mathbb{R}^{3}))}^{2}.
\end{equation}
On the other hand,
\[
\sqrt{-\Delta } B(w,w)(t)=\frac{\sqrt{-\Delta }\mathbb{P}\nabla }{\Delta } \Delta \int_{0}^{t} e^{(t-s)\Delta }(w w)(s)ds.
\]
Therefore, From Proposition \ref{max} and the continuity of the operator $%
\frac{\sqrt{-\Delta }\mathbb{P}\nabla }{\Delta }$ on the Lebesgue space $L^{%
\frac{q}{2}}(\mathbb{R}^{3}),$ we deduce that
\begin{equation}\label{he2}
\left\Vert \sqrt{-\Delta }B(w,w)\right\Vert _{L^{\frac{p}{2}}([0,\tau ],L^{%
\frac{q}{2}}(\mathbb{R}^{3}))}\lesssim \left\Vert w\right\Vert
_{L^{p}([0,\tau ],L^{q}(\mathbb{R}^{3}))}^{2}.
\end{equation}
Combining the estimates (\ref{he1}) and (\ref{he2}), we get
\[
\left\Vert B(w,w)\right\Vert _{L^{\frac{p}{2}}([0,\tau ],W^{1,\frac{q}{2}}(%
\mathbb{R}^{3}))}\lesssim \left\Vert w\right\Vert _{L^{p}([0,\tau ],L^{q}(%
\mathbb{R}^{3}))}^{2}.
\]%
Hence, applying Lemma \ref{GMO}, we obtain
\begin{equation}
\left\Vert B(w,w)\right\Vert _{L^{p}([0,\tau ],L^{q}(\mathbb{R}%
^{3}))}\lesssim \left(\sup_{0\leq s\leq \tau }\left\Vert B(w,w)(s)\right\Vert
_{B_{\infty }^{-1,\infty }}\right)^\frac{1}{2}\left\Vert w\right\Vert _{L^{p}([0,\tau ],L^{q}(%
\mathbb{R}^{3}))}.  \label{Est3}
\end{equation}%
Let $\varepsilon >0$ to be fixed later. The density of $L^{\infty }(%
\mathbb{R}^{3})\cap L^{3,\infty }(\mathbb{R}^{3})$ in $\tilde{L}^{3,\infty }(%
\mathbb{R}^{3})$ and the fact that $v\in C([0,T],\tilde{L}^{3,\infty }(%
\mathbb{R}^{3}))$ ensure the existence of $v_{\varepsilon }\in
C([0,T],L^{3,\infty }(\mathbb{R}^{3}))$ and $~v_{\infty }\in
C([0,T],L^{\infty }(\mathbb{R}^{3}))$ such that
\[
v=v_{\varepsilon }+v_{\infty },\sup_{0\leq s\leq T}\left\Vert v_{\varepsilon
}(s)\right\Vert _{L^{3,\infty }}\leq \varepsilon \text{ and }\sup_{0\leq
s\leq T}\left\Vert v_{\varepsilon }(s)\right\Vert _{\infty }=M_{\infty
}<\infty .
\]%
We decompose $B(v,w)$ into two terms as follows
\begin{eqnarray*}
B(v,w)(t)&=&
\int_{0}^{t} \mathbb{P}\nabla e^{(t-s)\Delta }(v_{\varepsilon
}w)(s)ds+\int_{0}^{t}\mathbb{P}\nabla e^{(t-s)\Delta }(v_{\infty
}w)(s)ds\\
&=& \frac{\mathbb{P}\nabla\sqrt{-\Delta }}{\Delta }
\Delta \int_{0}^{t}e^{(t-s)\Delta }\frac{1}{\sqrt{-\Delta }}(v_{\varepsilon
}w)(s)ds+\int_{0}^{t}\mathbb{P}\nabla e^{(t-s)\Delta }(v_{\infty
}w)(s)ds\\
&=& \Sigma_{\varepsilon}(t)+\Sigma_{\infty }(t).
\end{eqnarray*}
By applying, respectively,  the continuity of the operator $\frac{\mathbb{P}\nabla\sqrt{-\Delta }}{\Delta}$ on $ L^{q}(\mathbb{R}^{3})$, Proposition \ref{max} and Corollary \ref{Mon},  we obtain
\begin{eqnarray}
\left\Vert \Sigma_{\varepsilon}\right\Vert _{L^{p}([0,\tau ],L^{q}(\mathbb{R}%
^{3}))} &\lesssim &\sup_{0\leq s\leq \tau }\left\Vert v_{\varepsilon
}(s)\right\Vert _{L^{3,\infty }}\left\Vert w\right\Vert _{L^{p}([0,\tau
],L^{q}(\mathbb{R}^{3}))}  \nonumber \\
&\lesssim &\varepsilon \left\Vert w\right\Vert _{L^{p}([0,\tau ],L^{q}(%
\mathbb{R}^{3}))}.  \label{Est2}
\end{eqnarray}%
On the other hand, using as usual the estimate (\ref{kernel}) on the Ossen kernel, we get
\[\left\Vert \Sigma_{\infty }(t)\right\Vert _{L^{q}(\mathbb{R}^{3})} \lesssim
\int_{0}^{t}\frac{1}{\sqrt{t-s}}M_{\infty }\left\Vert w(s)\right\Vert
_{L^{q}(\mathbb{R}^{3})}ds
\]
Therefore, from Lemma \ref{con}, we have
\begin{equation}\label{Est0}
\left\Vert \Sigma_{\infty }\right\Vert _{L^{p}([0,\tau ],L^{q}(\mathbb{R}^{3}))}\lesssim M_{\infty }\sqrt{\tau }\left\Vert w\right\Vert_{L^{p}([0,\tau ],L^{q}(\mathbb{R}^{3}))}
\end{equation}
Finally, going back to the identity $w=B(w,w)+2B(v,w)$ and gathering the
estimates (\ref{Est3}), (\ref{Est2}), and (\ref{Est0}), we infer that
there exists an absolute constant $C>0$ (independent of $\varepsilon $ and $%
\tau $) such that for any $\tau \in \lbrack 0,T]$%
\[
\left\Vert w\right\Vert _{L^{p}([0,\tau ],L^{q}(\mathbb{R}^{3}))}\leq
C\big((\sup_{0\leq s\leq \tau }\left\Vert B(w,w)(s)\right\Vert
_{B_{\infty }^{-1,\infty }})^\frac{1}{2}+\varepsilon +M_{\infty }\sqrt{\tau }\big) \left\Vert
w\right\Vert _{L^{p}([0,\tau ],L^{q}(\mathbb{R}^{3}))}.
\]%
Hence, using the estimate (\ref{claim1}) and  choosing respectively $\varepsilon $ and $\tau $ small enough, we conclude that there exists $\tau \in (0,T]$ such
that
\[
\left\Vert w\right\Vert _{L^{p}([0,\tau ],L^{q}(\mathbb{R}^{3}))}\leq \frac{1%
}{2}\left\Vert w\right\Vert _{L^{p}([0,\tau ],L^{q}(\mathbb{R}^{3}))}
\]%
which implies that $w(t)=0$ on $[0,\tau ]$ and by consequent $u(t)=v(t)$ on $%
[0,\tau ].$ We shall now apply a well-known iteration argument to infer that  $u(t)=v(t)$ on the entire interval $[0,T].$ Let $t_\ast=\sup\{0\leq t\leq T: u=v  \text{ on } [0,t]\}$. We have $t_\ast\in[\tau,T].$ If $t_\ast<T$, then $u(.+t_\ast)$ and $v(.+t_\ast)$ are two mild solutions on $[0,T-t_\ast]$ to the Navier-Stokes equations associated to the same initial data $u(t_\ast)=v(t_\ast)$. Hence, applying what we have already done for $u$ and $v$ ensures the existence of $\tau_\ast\in(0,T-t_\ast)$ such that $u(.+t_\ast)=v(.+t_\ast)$ on the interval $[0,\tau_\ast ]$. This contradicts the definition of $t_\ast$. We then conclude that $u(t)=v(t)$ on $[0,T]$. Since the argument holds for every  $ T\in (0,\min\{T^\ast,T_1\}),$ we infer  that $u(t)=v(t)$ on the greater interval $[0,\min\{T^\ast,T_1\}).$ Finally, necessary  $T^\ast> T_1$, because if $T^\ast\leq T_1$ then
$$\lim_{t\to T^{\ast -}}v(t)=\lim_{t\to T^{\ast -}}u(t)=u(T^\ast)$$
in $B^{-1,\infty}_\infty(\mathbb{R}^3)$, which is impossible in view of  the last assertion of Proposition \ref{Ram}. This completes  the proof.
\end{proof}
The next theorem provides a control on the growth of $\left\Vert u(t)\right\Vert_\infty$  as $t$ approaches $0$ for regular solutions $u$ to the  Navier-Stokes equations that in addition belong to  the space $C([0,T],\tilde{B}^{-1,\infty}_\infty(\mathbb{R}^3))$.
\begin{theorem}\label{lem}
Let $T>0$ and $\gamma >1$ be two real numbers. If $u\in C([0,T],\tilde{B}%
_{\infty }^{-1,\infty }(\mathbb{R}^{3}))\cap C((0,T),B_{\infty }^{\gamma
,\infty }(\mathbb{R}^{3}))$ is a mild solution of the Navier Stokes
equations, then
\[
u\in  C_{0,\frac{\gamma+1}{2}}([0,T],B^{\gamma,\infty}_\infty(\mathbb{R}^3))\cap C_{0,\frac{1}{2}}([0,T],L^\infty(\mathbb{R}^3)).
\]
\end{theorem}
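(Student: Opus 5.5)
The idea is to run a bootstrap on the quantity
\[
N(t)=\sup_{0<s\leq t}s^{\frac{\gamma+1}{2}}\left\Vert u(s)\right\Vert_{B^{\gamma,\infty}_\infty},
\]
using the smallness of $u$ in $B^{-1,\infty}_\infty$ near $t=0$ modulo the free evolution. Once $N(t)\to0$ is known, the $L^\infty$ statement follows from Lemma \ref{interp}(2) with $X=L^\infty$, $s_1=-1$, $s_2=\gamma$:
\[
\sqrt{t}\,\Vert u(t)\Vert_\infty\lesssim \Vert u(t)\Vert_{B^{-1,\infty}_\infty}^{\frac{\gamma}{\gamma+1}}\Big(t^{\frac{\gamma+1}{2}}\Vert u(t)\Vert_{B^{\gamma,\infty}_\infty}\Big)^{\frac{1}{\gamma+1}}\to0,
\]
since $\Vert u(t)\Vert_{B^{-1,\infty}_\infty}$ stays bounded. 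So the main task is $N(t)\to0$.

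\textbf{Finiteness of $N$.} A priori $u\in C((0,T),B^{\gamma,\infty}_\infty)$ gives no control as $s\to0$. I would therefore first work on $[\delta,t]$ and restart the equation at time $\delta$:
\[
u(s)=e^{(s-\delta)\Delta}u(\delta)-\int_\delta^s\mathbb{P}\nabla e^{(s-\sigma)\Delta}(uu)(\sigma)\,d\sigma .
\]
The weighted norm $N_\delta(t)=\sup_{\delta<s\leq t}(s-\delta)^{\frac{\gamma+1}{2}}\Vert u(s)\Vert_{B^{\gamma,\infty}_\infty}$ is then finite. All the estimates below are uniform in $\delta$, so letting $\delta\to0$ is legitimate. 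Alternatively, I would mollify in time.

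\textbf{Decomposition and the main estimate.} Since $u_0\in\tilde B^{-1,\infty}_\infty$, choose $a\in\mathcal{S}$ with $\Vert u_0-a\Vert_{B^{-1,\infty}_\infty}\leq\varepsilon$. By continuity there is $t_0$ with $\Vert u(s)-u_0\Vert_{B^{-1,\infty}_\infty}\leq\varepsilon$ for $s\leq t_0$. Hence $u(s)=a+r(s)$ with $\sup_{s\leq t_0}\Vert r(s)\Vert_{B^{-1,\infty}_\infty}\leq2\varepsilon$, where $a$ is smooth and bounded in every norm.

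The linear term is handled by Lemma \ref{reg}(2), and the first assertion handles its vanishing in the tilde space. For the nonlinear term I would use the paraproduct splitting $uu=2\pi^a(u,u)+(\text{remainder of }\pi^b)$, that is, Lemma \ref{para} with $X=L^\infty$.
\begin{itemize}
\item For $\pi^a(u,u)$ with the low-frequency factor in $B^{-1,\infty}_\infty$: by Lemma \ref{para}(4) it lies in $B^{\gamma-1,\infty}_\infty$ with norm $\lesssim\Vert u\Vert_{B^{-1,\infty}_\infty}\Vert u\Vert_{B^{\gamma,\infty}_\infty}$.
\item For the $\pi^b$ part: by Lemma \ref{para}(3), since $\gamma-1>0$ (this is exactly where $\gamma>1$ is used), it also lies in $B^{\gamma-1,\infty}_\infty$ with the same bound.
\end{itemize}
In each product, one factor is split as $a+r$. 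The $r$ contributions carry the small factor $\varepsilon$. The $a$ contributions are estimated instead with Lemma \ref{para}(1)--(2), using $\Vert a\Vert_\infty<\infty$, and produce a factor $\sqrt{s}$ from time integration.

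Applying $\mathbb{P}\nabla$ (Lemma \ref{Fou}) and the heat smoothing of Lemma \ref{reg}(2) from $B^{\gamma-2,\infty}$ to $B^{\gamma,\infty}$, the Duhamel integral is bounded by
\[
\int_0^s (s-\sigma)^{-1}\sigma^{-\frac{\gamma+1}{2}}\,d\sigma .
\]
This diverges logarithmically, so I would stop short of the full two derivatives. Concretely, I would split the integral at $s/2$:
\begin{itemize}
\item On $[s/2,s]$, use Lemma \ref{reg}(3)'s second estimate, which gains one derivative with a factor $\sqrt{s}$, applied to a bound in $B^{\gamma-1,\infty}$.
\item On $[0,s/2]$, use the $B^{-1}$--$B^{-1}$ paraproduct bound together with the kernel decay $(s-\sigma)^{-\frac{\gamma+2}{2}}$, which is harmless there.
\end{itemize}
The outcome should be
\[
N(t)\leq C\sup_{s\leq t}s^{\frac{\gamma+1}{2}}\Vert e^{s\Delta}u_0\Vert_{B^{\gamma,\infty}_\infty}+C(\varepsilon+C_a\sqrt{t})\big(N(t)+1\big).
\]
Choosing $\varepsilon$ and then $t$ small absorbs $N(t)$ on the left. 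This gives $N$ finite, and then $\limsup_{t\to0}N(t)\lesssim\varepsilon$ for arbitrary $\varepsilon$, which is the claim.

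\textbf{Main obstacle.} The hard step is the critical time integral $\int(s-\sigma)^{-1}$ in the high-frequency part $[s/2,s]$. Here the paraproduct $\pi^a(r,u)$ loses a full derivative and the heat kernel must recover two. I would handle it as above: use the maximal-regularity-type bound of Lemma \ref{reg}(3), first estimate (sup in time, gaining 2 derivatives, no singular weight), on the interval $[s/2,s]$. On that interval the weights $\sigma^{-\frac{\gamma+1}{2}}\simeq s^{-\frac{\gamma+1}{2}}$ are comparable, so no logarithm appears.
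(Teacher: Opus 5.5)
The time shift that makes the weighted norm finite, the bound on $[s/2,s]$ and the final interpolation for $L^\infty$ all match the paper. On $[s/2,s]$ you put $\pi^a(r,u),\pi^b(r,u)\in B^{\gamma-1,\infty}_\infty$ via Lemma \ref{para}(3)--(4), which is where $\gamma>1$ enters, then apply the two-derivative estimate of Lemma \ref{reg}(3), with a $\sqrt{s}$ gain for the bounded part. Note that the one-derivative estimate in your first bullet only works for the $a$-contributions; for the $r$-contributions you need the two-derivative estimate, as you say at the end.

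\textbf{The gap.} It is the piece $\int_0^{s/2}\mathbb{P}\nabla e^{(s-\sigma)\Delta}(uu)(\sigma)\,d\sigma$, which appears because you write Duhamel from time $0$. There is no $B^{-1}$--$B^{-1}$ bound for $uu$. Lemma \ref{para}(4) handles $\pi^a(u,u)$, but $\pi^b(u,u)$ contains the resonant interaction, and Lemma \ref{para}(3) requires $s_1+s_2>0$. This is exactly why the bilinear term is not controlled by the $B^{-1,\infty}_\infty$ norm alone. Falling back on the weighted norm does not help either. Any pair of regularities allowed by Lemma \ref{para}(3), e.g. $\|u(\sigma)\|_{B^{-1,\infty}_\infty}\|u(\sigma)\|_{B^{\gamma,\infty}_\infty}\lesssim N\sigma^{-\frac{\gamma+1}{2}}$, yields a weight $\sigma^{-\kappa}$ with $\kappa>1$. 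That weight is not integrable at $\sigma=0$, however harmless the factor $(s-\sigma)^{-\frac{\gamma+2}{2}}$ is there. Splitting $u=a+r$ does not rescue this, since $\pi^b(r,r)$ has the same defect, only with a small constant.

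\textbf{The missing idea.} The paper never integrates near $\sigma=0$. It restarts the equation at time $s/2$:
\[
u(s)=e^{\frac{s}{2}\Delta}u(\tfrac{s}{2})-\int_{s/2}^{s}\mathbb{P}\nabla e^{(s-\sigma)\Delta}(uu)(\sigma)\,d\sigma .
\]
The free term is then handled by splitting the data, $u(\frac{s}{2})=a+r(\frac{s}{2})$. Lemma \ref{reg}(2) gives $s^{\frac{\gamma+1}{2}}\|e^{\frac{s}{2}\Delta}r(\frac{s}{2})\|_{B^{\gamma,\infty}_\infty}\lesssim\varepsilon$ and $s^{\frac{\gamma+1}{2}}\|e^{\frac{s}{2}\Delta}a\|_{B^{\gamma,\infty}_\infty}\lesssim\sqrt{s}\,\|a\|_\infty$.

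Equivalently, your low-time integral equals $e^{\frac{s}{2}\Delta}\big(e^{\frac{s}{2}\Delta}u_0-u(\tfrac{s}{2})\big)$. Its weighted $B^{\gamma,\infty}_\infty$ norm is $\lesssim\|e^{\frac{s}{2}\Delta}u_0-u(\frac{s}{2})\|_{B^{-1,\infty}_\infty}$, which tends to $0$ by continuity at $t=0$ and $u_0\in\tilde B^{-1,\infty}_\infty$. So its smallness comes from the equation itself, not from a bilinear estimate.

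With this change (relative to $\delta$ in your finiteness device), your inequality $N\le C(\varepsilon+C_a\sqrt{t})(1+N)$ holds and the rest of your argument goes through. Your fixed Schwartz function $a$, combined with continuity at $t=0$, is then a slightly cleaner substitute for the paper's time-dependent splitting $u=u_\varepsilon+u_\infty$ on all of $[0,T]$.
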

\begin{proof}
Let $\varepsilon >0$ be a fixed positive real number. Since $u\in C([0,T],%
\tilde{B}_{\infty }^{-1,\infty }(\mathbb{R}^{3})),$ there exist $%
u_{\varepsilon }$ and $u_{\infty }$ in $C([0,T],\tilde{B}_{\infty
}^{-1,\infty }(\mathbb{R}^{3}))$ such that%
\[
u=u_{\varepsilon }+u_{\infty },\sup_{0\leq s\leq T}\left\Vert u_{\varepsilon
}(s)\right\Vert _{B_{\infty }^{-1,\infty }}\leq \varepsilon \text{ and }%
\sup_{0\leq s\leq T}\left\Vert u_{\infty }(s)\right\Vert _{\infty
}=N_{\infty }<\infty .
\]%
Let $(t_{n})\in (0,\frac{T}{2}]$ be a sequence converging to $0.$ We set
$u_{n}(t)=u(t+t_{n})$ for any $n\in \mathbb{N}$ and $t\in \lbrack 0,\frac{T}{%
2}].$ Since $u_{n}$ is a  mild solution on $[0,\frac{T}{2}]$ to the Navier
Stokes equations, then%
\begin{eqnarray}
u_{n}(t) &=&e^{\frac{t}{2}\Delta }u_{n}(\frac{t}{2})-\int_{\frac{t}{2}}^{t}%
\mathbb{P}\nabla e^{(t-s)\Delta }u_{n}(s)u_{n}(s)ds  \nonumber \\
&=&e^{\frac{t}{2}\Delta }u_{n}(\frac{t}{2})-\sum_{\nu \in \{a,c\}}\int_{%
\frac{t}{2}}^{t}\mathbb{P}\nabla e^{(t-s)\Delta }\pi ^{\nu
}(u_{n}(s),u_{n}(s))ds  \nonumber \\
&=&e^{\frac{t}{2}\Delta }u_{\varepsilon n}(\frac{t}{2})+e^{\frac{t}{2}\Delta
}u_{\infty n}(\frac{t}{2})-\sum_{\nu \in \{a,b\}}\int_{\frac{t}{2}%
}^{t}e^{(t-s)\Delta }\mathbb{P}\nabla \pi ^{\nu }(u_{\varepsilon
n}(s),u_{n}(s))ds  \nonumber \\
&&-\sum_{\nu \in \{a,b\}}\int_{\frac{t}{2}}^{t}e^{(t-s)\Delta }\mathbb{P}%
\nabla \pi ^{\nu }(u_{\infty n}(s),u_{n}(s))ds,  \label{IDE}
\end{eqnarray}%
where $u_{\varepsilon n}(t)=u_{\varepsilon }(t+t_{n})$ and $u_{\infty
n}(t)=u_{\infty }(t+t_{n}).$ We now introduce  the sequence of functions $%
\Theta _{n}:[0,\frac{T}{2}]\rightarrow \mathbb{R}$ defined by%
\[
\Theta _{n}(t)=\sup_{0\leq s\leq t}s^{\frac{\gamma +1}{2}}\left\Vert
u_{n}(s)\right\Vert _{B_{\infty }^{\gamma ,\infty }}.
\]%
Applying the second assertion of Lemma \ref{reg}, we obtain%
\begin{eqnarray}
\left\Vert e^{\frac{t}{2}\Delta }u_{\varepsilon n}(\frac{t}{2})\right\Vert
_{B_{\infty }^{\gamma ,\infty }} &\lesssim &t^{-\frac{\gamma +1}{2}%
}\left\Vert u_{\varepsilon n}(\frac{t}{2})\right\Vert _{B_{\infty
}^{-1,\infty }}  \nonumber \\
&\lesssim &\varepsilon ~t^{-\frac{\gamma +1}{2}},  \label{M1}
\end{eqnarray}%
and
\begin{eqnarray}
\left\Vert e^{\frac{t}{2}\Delta }u_{\infty n}(\frac{t}{2})\right\Vert
_{B_{\infty }^{\gamma ,\infty }} &\lesssim &t^{-\frac{\gamma }{2}}\left\Vert
u_{\varepsilon n}(\frac{t}{2})\right\Vert _{B_{\infty }^{0,\infty }}
\nonumber \\
&\lesssim &t^{-\frac{\gamma }{2}}\left\Vert u_{\varepsilon n}(\frac{t}{2}%
)\right\Vert _{\infty }  \nonumber \\
&\lesssim &N_{\infty }\text{ }t^{-\frac{\gamma }{2}}.  \label{M3}
\end{eqnarray}%
On the other hand, using again Lemma \ref{reg} with Lemma \ref{Fou} and Lemma \ref{para}, we get
\begin{eqnarray}
\left\Vert \int_{\frac{t}{2}}^{t}e^{(t-s)\Delta }\mathbb{P}\nabla \pi ^{\nu
}(u_{\varepsilon n}(s),u_{n}(s))ds\right\Vert _{B_{\infty }^{\gamma ,\infty
}} &\lesssim &\sup_{\frac{t}{2}\leq s\leq t}\left\Vert \mathbb{P}\nabla \pi
^{\nu }(u_{\varepsilon n}(s),u_{n}(s))\right\Vert _{B_{\infty }^{\gamma
-2,\infty }}  \nonumber \\
&\lesssim &\sup_{\frac{t}{2}\leq s\leq t}\left\Vert \pi ^{\nu
}(u_{\varepsilon n}(s),u_{n}(s))\right\Vert _{B_{\infty }^{\gamma -1,\infty
}}  \nonumber \\
&\lesssim &\sup_{\frac{t}{2}\leq s\leq t}\left\Vert u_{\varepsilon
n}(s)\right\Vert _{B_{\infty }^{-1,\infty }}\left\Vert u_{n}(s)\right\Vert
_{B_{\infty }^{\gamma ,\infty }}  \nonumber \\
&\lesssim &\varepsilon ~t^{-\frac{\gamma +1}{2}}~\Theta _{n}(t).  \label{M4}
\end{eqnarray}%
Moreover, invoking estimate (\ref{kernel}) and Lemma \ref{para}, yields
\begin{eqnarray}
\left\Vert \int_{\frac{t}{2}}^{t}e^{(t-s)\Delta }\mathbb{P}\nabla \pi ^{\nu
}(u_{\infty n}(s),u_{n}(s))ds\right\Vert _{B_{\infty }^{\gamma ,\infty }}
&\lesssim &\int_{\frac{t}{2}}^{t}\frac{1}{\sqrt{t-s}}\left\Vert \pi ^{\nu
}(u_{\infty n}(s),u_{n}(s))\right\Vert _{B_{\infty }^{\gamma ,\infty }}ds
\nonumber \\
&\lesssim &\int_{\frac{t}{2}}^{t}\frac{1}{\sqrt{t-s}}\left\Vert
u_{\varepsilon n}(s)\right\Vert _{\infty }\left\Vert u_{n}(s)\right\Vert
_{B_{\infty }^{\gamma ,\infty }}ds  \nonumber \\
&\lesssim &N_{\infty }\Theta _{n}(t)\int_{\frac{t}{2}}^{t}\frac{1}{\sqrt{t-s}%
}\frac{1}{s^{\frac{\gamma +1}{2}}}ds  \nonumber \\
&\simeq &t^{-\frac{\gamma }{2}}N_{\infty }\Theta _{n}(t).  \label{M5}
\end{eqnarray}%
Collecting the estimates  (\ref{IDE}), (\ref{M1}), (\ref{M3}), (\ref{M4}), and (\ref%
{M5}),  we deduce that there exists a constant $C>0$ independent of $%
t,\varepsilon ,$ and $n,$ such that%
\begin{equation}\label{bas}
\Theta _{n}(t)\leq C\left( \left[\varepsilon +N_{\infty }\sqrt{t}\right]+\left[
\varepsilon +N_{\infty }\sqrt{t}\right] \Theta _{n}(t)\right) .
\end{equation}
Let  us assume now that $\varepsilon\leq\frac{1}{4C}$ and choose  $\delta >0$ small enough such that
\[
(\varepsilon +N_{\infty }\sqrt{\delta})\leq \frac{1}{2C}.
\]
Therefore, the inequality  (\ref{bas}) implies
\[
\Theta _{n}(\delta )\leq 2C\left( \varepsilon +N_{\infty }\sqrt{\delta}\right) .
\]%
Letting now $n\rightarrow \infty ,$ we conclude that %
\[
\sup_{0\leq t\leq \delta }t^{\frac{1+\gamma }{2}}\left\Vert u(t)\right\Vert
_{B_{\infty }^{\gamma ,\infty }}\leq 2C\left( \varepsilon +N_{\infty }\sqrt{\delta}\right) .
\]%
Since $\varepsilon $ is an arbitrary positive real number, the last inequality implies%
\begin{equation}\label{Aicha}
\lim_{t\rightarrow 0}t^{\frac{1+\gamma }{2}}\left\Vert u(t)\right\Vert
_{B_{\infty }^{\gamma ,\infty }}=0.
\end{equation}
Using now the fact that $ u\in L^\infty([0,T],B_{\infty }^{-1,\infty })$ and  the interpolation inequality (see Lemma \ref{interp})
\[
\left\Vert f\right\Vert _{\infty }\lesssim \left( \left\Vert f\right\Vert
_{B_{\infty }^{-1,\infty }}\right) ^{\frac{\gamma }{\gamma +1}}\left(
\left\Vert f\right\Vert _{B_{\infty }^{\gamma ,\infty }}\right) ^{\frac{1}{%
\gamma +1}},
\]%
we deduce the estimate
\[
\sqrt{t}\left\Vert u(t)\right\Vert _{\infty }\lesssim \left( \left\Vert u(t)\right\Vert
_{B_{\infty }^{-1,\infty }}\right) ^{\frac{\gamma }{\gamma +1}}\left(t^{\frac{1+\gamma}{2}}
\left\Vert u(t)\right\Vert _{B_{\infty }^{\gamma ,\infty }}\right) ^{\frac{1}{%
\gamma +1}}
\]
that, thanks to (\ref{Aicha}), yields  the desired conclusion
\[
\sqrt{t}\left\Vert u(t)\right\Vert _{\infty }\rightarrow 0\text{ as }%
t\rightarrow 0.
\]
The proof is then achieved.
\end{proof}
We are now in a position to prove the main result of this paper, namely Theorem \ref{main}.
\begin{proof}
Let $u\in L^p([0,T],\tilde{L}^{3,\infty}(\mathbb{R}^3))\cap C([0,T],B^{-1,\infty}_\infty(\mathbb{R}^3)) $ be a solution to the Navier-Stokes equations. We will first prove the regularity properties of $u$. For every $\nu \in (0,T)$ there exists $t_{0}\in (0,\nu) $ such
that $v_{0}:=u(t_{0})$ belongs to the Kato space $\tilde{L}^{3,\infty }(\mathbb{R}^{3}).$ Let $v$ be the
maximal $\tilde{L}^{3,\infty }(\mathbb{R}^{3})$- Kato solution of the
Navier-Stokes associated to the initial data $v_{0}$ and let $T^{\ast }\in
(0,\infty ]$ be its maximal time of existence. Since $ u(.+t_0)$ is also a solution of the same Navier-Stokes equations, then according to Theorem \ref{WS}, $ T^\ast>T-t_0,
u(t+t_{0})=v(t)$ on the interval $[0,T-t_{0}].$ Hence, by
using the regularity properties of Kato solutions (see Proposition \ref{Ram}) and the fact that $t_0$ is arbitrary small, we infer that
\[u\in C^\infty((0,T^\ast),B^{s,\infty}_X\cap B^{s,\infty}_\infty), ~\forall s>0;\]
which implies in particular that $u\in C^\infty((0,T]\times\mathbb{R}^3).$
On the other hand since $\tilde{L}^{3,\infty}(\mathbb{R}^3)\hookrightarrow \tilde{B}_{\infty }^{-1,\infty }$, $u(t)\in \tilde{B}_{\infty }^{-1,\infty }$ for almost every $t\in[0,T]$. By consequent,  $u\in C([0,T],\tilde{B}_{\infty }^{-1,\infty })$. Therefore,  in view
of Theorem \ref{lem},
\[
u\in  C_{0,\frac{s+1}{2}}([0,T],B^{s,\infty}_\infty(\mathbb{R}^3))\cap C_{0,\frac{1}{2}}([0,T],L^\infty(\mathbb{R}^3)),~\forall s>0.
\]
\par\noindent Proceeding exactly as in the proof of Theorem \ref{lem}, we obtain  $u\in  C_{0,\frac{s}{2}}([0,T];B^{s,\infty}_{L^{3,\infty}(\mathbb{R}^3)})$ for any $s>0.$ Hence, Using the first interpolation inequality in Lemma \ref{interp}, we infer that for any positive real number $\alpha$, $u\in  C_{0,\frac{\alpha}{2}}([0,T],B^{\alpha,1}_{L^{3,\infty}(\mathbb{R}^3)})$. Thanks to boundedness of the operator $\sqrt{-\Delta}^\alpha$ from $B ^{\alpha,1}_{L^{3,\infty}(\mathbb{R}^3)}$ in to  $ B ^{0,1}_{L^{3,\infty}(\mathbb{R}^3)} $ (Lemma \ref{Fou}) and the continuous embedding $ B ^{0,1}_{L^{3,\infty}(\mathbb{R}^3)} \hookrightarrow L^{3,\infty}(\mathbb{R}^3)$ , we deduce that  $ \sqrt{-\Delta}^\alpha u\in C_{0,\frac{\alpha}{2}}([0,T], L^{3,\infty}(\mathbb{R}^3)).$
\par\noindent It remains to prove the uniqueness assertion in Theorem \ref{main}. Let $u_1$ and $u_2$ be two solutions of the Navier--Stokes equations with the same initial data $u_0$, both belonging to the space $%
L^{p}([0,T],\tilde{L}^{3,\infty }(\mathbb{R}^{3}))\cap C([0,T],B_{\infty
}^{-1,\infty }(\mathbb{R}^{3}))$. The function $w:=u_{2}-u_{1}$
satisfies
\begin{eqnarray*}
w &=&B(u_{2},u_{2})-B(u_{1},u_{1}) \\
&=&B(u_{2},w)+B(w,u_{1}).
\end{eqnarray*}%
Hence, using the estimate (\ref{kernel}) and the fact that Lorentz spaces are admissible spaces,  we obtain
\begin{eqnarray*}
\left\Vert w(t)\right\Vert _{L^{3,\infty }} &\lesssim &\int_{0}^{t}\frac{1}{%
\sqrt{t-s}\sqrt{s}}\left( \sqrt{s}\left\Vert u_{2}(s)\right\Vert _{\infty }+%
\sqrt{s}\left\Vert u_{1}(s)\right\Vert _{\infty }\right) \left\Vert
w(s)\right\Vert _{L^{3,\infty }}ds \\
&\lesssim &\sigma (t)\int_{0}^{t}\frac{1}{\sqrt{t-s}\sqrt{s}}\left\Vert
w(s)\right\Vert _{L^{3,\infty }}ds,
\end{eqnarray*}%
where $\sigma(t)=\sup_{0\leq s\leq t}\left( \sqrt{s}\left\Vert
u_{2}(s)\right\Vert _{\infty }+\sqrt{s}\left\Vert u_{1}(s)\right\Vert
_{\infty }\right) .$ Invoking Lemma \ref{mey}, we infer
that for that for any $\delta \in (0,T],$%
\[
\left\Vert w\right\Vert _{L^{p}([0,\delta ],L^{3,\infty })}\lesssim \sigma
(\delta )\left\Vert w\right\Vert _{L^{p}([0,\delta ],L^{3,\infty })}.
\]%
This, combined with the fact that $\lim_{t\rightarrow 0}\sigma (t)=0$, ensures
the existence of $\delta \in (0,T]$ such that $\left\Vert w\right\Vert
_{L^{p}([0,\delta ],L^{3,\infty })}\lesssim \frac{1}{2}\left\Vert
w\right\Vert _{L^{p}([0,\delta ],L^{3,\infty })}.$ Therefore $w=0$ on $%
[0,\delta ]$ and by consequent $u_{1}=u_{2}$ on $[0,\delta ].$ By the standard  iteration process,
we conclude that $u_{1}=u_{2}$ on the entire interval $[0,T].$ This completes the proof.

\end{proof}

\end{document}